\documentclass[11pt]{amsart}
\usepackage{comment}
\usepackage[utf8]{inputenc}
\usepackage{enumitem}
\usepackage{colonequals,amsmath,amssymb,amsthm,mathtools,mathrsfs}
\usepackage[ruled,vlined]{algorithm2e}
\usepackage{float}
\usepackage{tikz,caption,subcaption}
\usepackage{pbox,tkz-graph}
\usetikzlibrary{positioning, shapes, plotmarks, decorations.markings, arrows,snakes,positioning}
\tikzstyle{vertex} = [circle, draw, fill, minimum size=4pt, inner sep=0pt]

\graphicspath{ {./images/} }
\usepackage[colorlinks=true]{hyperref}
\hypersetup{colorlinks=true,  linkcolor=blue,  filecolor=blue,  citecolor=blue, urlcolor=blue}
\usepackage{geometry}
\geometry{left=25mm, right=27mm, top=27mm, bottom=25mm}
\usepackage[capitalize,noabbrev]{cleveref}
\usepackage{pdfpages}
\usepackage{microtype}
\usepackage[justification=centering]{caption}
\usepackage{titlesec}
\usepackage{indentfirst}
\usepackage[style=ieee-alphabetic, maxbibnames=99, backend=biber,giveninits=true,noerroretextools=true,url=false,doi=true,isbn=false]{biblatex}
\ExecuteBibliographyOptions{maxalphanames = 4,	maxcitenames  = 3,	minalphanames = 3,	minnames = 1}
\addbibresource{refs.bib}

\newtheorem{theorem}[equation]{Theorem}
\newtheorem{proposition}[equation]{Proposition}
\newtheorem{lemma}[equation]{Lemma}
\newtheorem{corollary}[equation]{Corollary}

\theoremstyle{definition}
\newtheorem{definition}[equation]{Definition}
\newtheorem{remark}[equation]{Remark}
\newtheorem{example}[equation]{Example}
\newtheorem{question}[equation]{Question}

\numberwithin{equation}{section}

\newcommand{\an}{{\operatorname{an}}}

\newcommand{\can}{{\operatorname{can}}}
\newcommand{\Jac}{{\operatorname{Jac}}}
\newcommand{\Spec}{{\operatorname{Spec}}\,}

\newcommand{\red}{{\operatorname{red}}}
\newcommand{\wt}{{\operatorname{wt}}}
\newcommand{\val}{{\operatorname{val}}}
\newcommand{\coker}{\operatorname{coker}}

\newcommand{\Lie}{{\operatorname{Lie}}}

\newcommand{\ord}{{\operatorname{ord}}}
\newcommand{\vcan}{v_\mathrm{can}}
\renewcommand{\div}{\operatorname{div}\,}
\newcommand{\lra}{\longrightarrow}

\newcommand*\Z{\mathbb{Z}}
\newcommand*\Q{\mathbb{Q}}
\newcommand*\R{\mathbb{R}}
\newcommand{\A}{\mathscr{A}}
\newcommand*\C{\mathscr{C}}
\newcommand{\J}{\mathscr{J}}

\makeatletter
\renewcommand{\section}{\@startsection {section}{1}{\z@}
	{-3.5ex \@plus -1ex \@minus -.2ex}
	{2.3ex \@plus .2ex}
	{\Large \bfseries \filcenter}}
\makeatother

\raggedbottom
\tolerance = 2500

\begin{document}
\title{Jumps of Jacobians via orthogonal canonical forms}

\author{Enis Kaya, Michaël Maex and Art Waeterschoot}
\address{Department of Mathematics, KU Leuven, Celestijnenlaan 200B, 3001 Heverlee, Belgium.}
\subjclass[2020]{Primary 14H25, Secondary 14D10, 14E22}
\keywords{Models of curves, weight functions, Jumps of Jacobians, tame ramification}
\thanks{EK was supported by FWO (grant GYN-D9843-G0B1721N). MM was supported by long term
structural funding (Methusalem grant) by the Flemish Government. AW was supported by FWO (grant 11F0123N)}

\email{enis.kaya@kuleuven.be}
\email{michael.maex@kuleuven.be}
\email{art.waeterschoot@kuleuven.be}

\begin{abstract}
Given a smooth, proper curve $C$ over a discretely valued field $k$, we equip the $k$-vector space $H^{0}(C,\omega_{C/k})$ with a canonical discrete valuation $\vcan$ which measures how canonical forms degenerate on regular integral models of $C$. More precisely, $\vcan$ maps a canonical form to the minimal value of its associated weight function, as introduced by Musta\c{t}\u{a}--Nicaise. Our main result states that $\vcan$ computes Edixhoven's jumps of the Jacobian of $C$ when evaluated in an orthogonal basis. As a byproduct, we deduce a short proof for the rationality of the jumps of Jacobians. We also show how $\vcan$ and the jumps can be computed efficiently for the class of $\Delta_v$-regular curves introduced by Dokchitser.
\end{abstract}

\maketitle

\section{Introduction} \label{section: intro}
\refstepcounter{equation}\subsection{Setup and motivation.}\label{motivation}

Suppose $k$ is a discretely valued field with ring of integers $R$, residue field $\tilde{k}$ and normalised valuation $v\colon k\twoheadrightarrow\overline{\Z}\coloneqq \Z\cup\{\infty\}$. Let $C$ be a smooth, proper and geometrically connected $k$-curve of genus $g\ge 1$. Write $\omega_{C/k}$ for the canonical sheaf of $C/k$ and $V\coloneqq H^0(C,\omega_{C/k})$ for the $k$-vector space of canonical forms on $C$; note that $\dim V=g$.

In this article, we study a canonical valuation $\vcan\colon V\to \overline{\Q}\coloneqq \Q\cup \{\infty\}$ which, roughly speaking, measures how canonical forms degenerate on regular integral $R$-models of $C$. Our main result, which is Theorem~\ref{thm:MainThmcopy} below, informally says that under the conditions that $k$ is Henselian with algebraically closed residue field and $C$ has index one\footnote{In other words, $C$ admits a zero divisor of degree one -- this is, for instance, the case if $C$ admits a $k$-rational point.}, the geometrically defined valuation $\vcan$ determines an interesting set of arithmetic invariants of $C$, namely Edixhoven's \emph{jumps} of the Jacobian variety $\mathrm{Jac}(C)$ of $C$ \cite{Edi92}. 
 
 As we recall in Section~\ref{subsection: jumps} below, if $k$ is a strictly Henselian discretely valued field, following Edixhoven one can define the jumps of a $g$-dimensional Abelian $k$-variety $A$ as a $g$-tuple of (not necessarily distinct) real numbers in $[0,1)$, which measure the behaviour of the N\'eron model of $A$ over tamely ramified extensions of $k$. In \cite[\S5.4.5]{Edi92}, Edixhoven asked if the jumps of $A$ are in fact rational. In general this question is an open problem, and we have an affirmative answer in the case of Jacobians by the work of Halle--Nicaise \cite[Corollary~6.3.1.5]{HN16} and Eriksson--Halle--Nicaise \cite[Theorem~4.4.1(a)]{EHN15}. Theorem~\ref{thm:MainThmcopy} provides a third argument by a new method, and we manage to make our approach explicit for the class of so-called $\Delta_v$-regular curves, as studied in \cite{Dok21}; see Theorem~\ref{cor:jumps delta-regular} and Example~\ref{example}. We remark that previously known methods for computing Edixhoven jumps include \cite{Edi92} (elliptic curves), \cite{Hal10} (elliptic curves and genus $2$~curves) and \cite{EHN15} (arbitrary curves); all of these methods require computation of the special fiber of a regular integral model with normal crossings.

\refstepcounter{equation}\subsection{Canonical valuation.}\label{sec:CanVal}
  More concretely, we can describe $\vcan$ as follows. Suppose $\C$ is a regular $R$-model of $C$, by this we mean that $\C/\mathrm{Spec}\,R$ is a regular proper flat $R$-curve with generic fiber isomorphic to $C/k$. Assume further that the reduced special fiber $\C_{s,\mathrm{red}}$ is a strict normal crossings divisor, in which case we refer to the model $\C$ as an \emph{snc} $R$-model of $C$. Suppose $E$ is a prime component of the reduced special fiber $\C_{s,\red}$. Then for $\omega\in V\setminus \{0\}$, we set
  \begin{equation}
   v_E(\omega)\coloneqq\frac{\ord_{E}(\div\omega)+1}{\mathrm{mult}(E)}-1\label{eqn:intro defn vE}
  \end{equation}
  where $\omega$ is viewed as a meromorphic section of the relative canonical sheaf $\omega_{\C/R}$. The extra additive terms $\pm1$ make this construction well behaved under tame base change and these have a log-geometric interpretation; see also Definition~\ref{defn: vcan} below. We set $v_E(0)\coloneqq\infty$ and $$\vcan(\omega)\coloneqq\min_{E}v_E(\omega),$$ where the minimum is taken over the prime components $E$ of $\C_{s,\red}$. In fact, the definition of $\vcan$ does not depend on the choice of $\C$; see Section~\ref{subsection: canonical valuation} for details. 

\refstepcounter{equation}\subsection{Properties of $\vcan$.}\label{intro: vcan is k-valuation} It is not hard to see that $\vcan$ is a \emph{discrete $k$-valuation on $V$}, by which we understand that 
\begin{enumerate}[label=(\alph*)]
    \item we have the ultrametric inequality $\vcan(\omega+\omega')\ge \min\{\vcan(\omega),\vcan(\omega')\}$ for all $\omega,\omega'\in V$;
    \item we have compatibility with the normalised valuation $v\colon k\twoheadrightarrow\Z$, i.e.,  $\vcan(a\omega)=v(a)+\vcan(\omega)$ for all $\omega\in V,a\in k$; and
        \item the image of $\vcan$ is contained in $\frac{1}{e}\overline{\Z}$ for some $e\in\Z_{>0}$. For instance, we can let $e$ be the least common multiple of the multiplicities of the prime components $E$. Beware that the image of $\vcan$ might not be closed under addition, so that it might fail to be of the form $\frac{1}{e}\overline{\Z}$ for some integer $e$.
\end{enumerate}
\begin{remark}
We stress that our definition of $\vcan$ is directly inspired by the weight functions of Musta\c{t}\u{a}--Nicaise \cite{MN15}, see Section~\ref{subsection: weight functions} and Lemma~\ref{lemma: defn vcan} below.  

We were also inspired from Temkin's study of metrics on $\omega_{C/k}$ \cite{T16}, as every sensible metric on $\omega_{C/k}$ (like the family of weight functions) induces a $k$-valuation on $V$, obtained as a minimum of $k$-valuations by evaluating in sections. 

With some imagination, one can also view the calculus of log canonical thresholds in singularity theory as another antecedent of $\vcan$.
\end{remark}

\refstepcounter{equation}\subsection{Main result.} For $\rho\in \R$, we set $V^{\ge \rho} \coloneqq \{\omega\in V:\ \vcan(\omega)\ge \rho\}$ and similarly we define $V^{>\rho}$. By Section~\ref{intro: vcan is k-valuation}, this induces a filtration on $V$ by $R$-lattices, and we denote by $\mathrm{Gr}^\rho V\coloneqq V^{\ge \rho}/V^{>\rho}$ the $\rho^{\text{th}}$ graded piece. Then the following theorem allows to compute the $\tilde{k}$-dimensions of these graded pieces.

\begin{theorem}\label{thm:MainThmcopy} Let $k$ be a Henselian discretely valued field with algebraically closed residue field $\tilde{k}$ and let $C$ be a smooth, proper and geometrically connected $k$-curve of genus $g\ge 1$ admitting a divisor of degree $1$.
Then for all $j\in [0,1)$, the integer $\dim_{\tilde{k}}\mathrm{Gr}^{-j}V$ equals the multiplicity of $j$ as a jump of $\mathrm{Jac}(C)$.
\end{theorem}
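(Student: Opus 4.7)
The plan is to reduce, via a tame base change, to an explicit comparison between an eigenspace decomposition and the filtration induced by $\vcan$. Recall that for any discrete $k$-valuation on a finite-dimensional $k$-vector space, one can choose an orthogonal basis, and such a basis $\omega_1,\ldots,\omega_g$ satisfies $\dim_{\tilde k}\mathrm{Gr}^\rho V=\#\{i:\vcan(\omega_i)=\rho\}$. The theorem is thus equivalent to producing an orthogonal basis of $V$ whose valuations, each shifted into $(-1,0]$ by a scalar in $k^\times$, match the negatives of Edixhoven's jumps as a multiset.

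To make contact with the jumps, fix a regular SNC model $\C/R$ of $C$ and choose $e\geq 1$ coprime to the residue characteristic and divisible by every multiplicity $\mathrm{mult}(E)$. Let $k'=k(\pi^{1/e})$ and $R'=R[\pi^{1/e}]$; a minimal desingularization $\C'/R'$ of $\C\times_R R'$ is then semistable. Raynaud's theorem, applicable because the index of $C_{k'}$ divides that of $C$ and hence equals $1$, identifies the Lie algebra of the N\'eron model of $\mathrm{Jac}(C_{k'})$ with the free $R'$-module $L\coloneqq H^0(\C',\omega_{\C'/R'})$ of rank $g$. By Edixhoven's definition, the jumps of $\mathrm{Jac}(C)$ are exactly the rationals $c/e\in[0,1)$, each with multiplicity $\dim_{\tilde k}(L\otimes_{R'}\tilde k)_{c}$, where the subscript denotes the $\zeta\mapsto\zeta^c$ isotypic component under the natural $\mu_e=\mathrm{Gal}(k'/k)$-action on $L$.

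The bridge to $\vcan$ is a compatibility between $\vcan$ and tame base change which I expect to read off from the Musta\c{t}\u{a}--Nicaise weight-function formalism: the analogously defined valuation $\vcan'$ on $V\otimes_k k'$ satisfies $\vcan'=e\cdot\vcan$, and on the semistable model $\C'$ the defining formula reduces to $\vcan'(\omega)=\min_{E'}\ord_{E'}(\div\omega)$, precisely measuring the failure of $\omega$ to lie in $L$. Diagonalizing the $\mu_e$-action on $V\otimes_k k'$ yields a basis $\omega_1',\ldots,\omega_g'$ of eigenvectors with characters $c_i$; the twisted elements $\omega_i\coloneqq\pi'^{-c_i}\omega_i'$ descend to Galois-invariant elements of $V$ with $\vcan(\omega_i)=-c_i/e$. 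Checking that the $\vcan'$-filtration is simultaneously diagonalized with the $\mu_e$-decomposition then shows that $\omega_1,\ldots,\omega_g$ form an orthogonal basis, which concludes the proof.

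The main obstacle is to pin down precisely the rescaling formula $\vcan'=e\cdot\vcan$ and the simultaneous diagonalization claim. This requires careful bookkeeping of the shifts by $+1$ and $-1$ in \eqref{eqn:intro defn vE} under tame base change (which encode the difference between the canonical sheaf and the relative dualizing sheaf), verifying that the twisted basis is genuinely Galois-invariant and compatible with the integrality coming from $L$, and ensuring that the desingularization $\C'$ does not introduce spurious exceptional components in excess of what is needed for semistability.
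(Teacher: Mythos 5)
Your approach — constructing the orthogonal basis directly from $\mu_e$-eigenvectors of the lattice $L$ on a single semistabilizing extension $k'/k$ — is genuinely different from the paper's, which instead takes a limit over a cofinal tower of tame extensions of degree coprime to a fixed denominator $e$ (Lemma~\ref{lem:jumps limit coprime}, Lemma~\ref{lem:unique_extension_val_vector_space}). Within its scope, your argument is essentially sound: once $\C'$ is semistable, $\vcan'$ has integer value group, $L=H^0(\C',\omega_{\C'/R'})$ is precisely its unit ball $V'^{\geq 0}$, so \emph{any} $R'$-basis of $L$ is automatically orthogonal; the ``simultaneous diagonalization'' you flag as a concern is therefore vacuous. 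The Galois-invariance of $\pi'^{-c_i}\omega_i'$ is a one-line character computation, the rescaling $\vcan'=[k':k]\cdot\vcan$ on $V$ is Lemma~\ref{lemma: props wt functions}, and the identification of $L\subset\Omega(C)\otimes R'$ with $\phi_{k'/k}$ is Lemma~\ref{lemma: tangent space via regular models}. In the tame case, one recovers the jumps from the elementary divisors $\pi'^{c_i}$ of that inclusion exactly as you say.

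However, there is a genuine gap: the construction requires an integer $e$ that is simultaneously coprime to the residue characteristic $p$ and divisible by every multiplicity $\mathrm{mult}(E)$. When $p$ divides some multiplicity — which happens precisely when $\Jac(C)$ is wildly ramified, a case the theorem must cover — no such $e$ exists, and the minimal desingularization of $\C\times_R R'$ over a tame $k'$ will never be semistable. In that case the limit in Definition~\ref{def: k-jumps} does not stabilize at any finite tame level, so the jumps cannot be read off a single extension via a $\mu_e$-isotypic decomposition; the paper's passage to the limit over all degrees coprime to $e$ (where now $e$ is just a denominator bound for $\vcan$, not a semistabilization exponent) is exactly what sidesteps this. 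Your argument would need an entirely separate mechanism for the wild case, or a reduction of the wild case to the tame one, neither of which is indicated.

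A minor point: the opening claim that any finite-dimensional valued $k$-vector space admits an orthogonal basis requires $k$ complete (this is \cite[Corollary~2.4.3.11]{BGR}), so one must first show both sides of the statement are invariant under completing $k$; the paper handles this in Lemma~\ref{lemma: lemma for first part main}.
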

\begin{corollary} The image of $\vcan$ is $$\bigcup_{j\text{ jump of }\Jac\,{C}}\left(-j+\Z\right)$$ and the jumps of $\Jac\,{C}$ are rational numbers.
\end{corollary}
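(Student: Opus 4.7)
The plan is to deduce the corollary formally from Theorem~\ref{thm:MainThmcopy} together with the properties of $\vcan$ recorded in Section~\ref{intro: vcan is k-valuation}. The first observation is that $\rho\in\R$ lies in the image of $\vcan|_{V\setminus\{0\}}$ precisely when $\mathrm{Gr}^\rho V\neq 0$: if $\vcan(\omega)=\rho$ then $\omega\in V^{\ge\rho}\setminus V^{>\rho}$ represents a nonzero class in $V^{\ge\rho}/V^{>\rho}$, and conversely any nonzero class in $\mathrm{Gr}^\rho V$ lifts to some $\omega$ with $\vcan(\omega)=\rho$.

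Next I would exploit property (b) of Section~\ref{intro: vcan is k-valuation}. Since the normalised valuation $v\colon k^\times\twoheadrightarrow\Z$ is surjective, multiplication by an element of $k^\times$ of valuation $n$ shifts $\vcan$ by $n$, so the image of $\vcan|_{V\setminus\{0\}}$ is stable under translation by $\Z$. Writing any real number uniquely as $-j+n$ with $j\in[0,1)$ and $n\in\Z$, this $\Z$-invariance shows that $-j+n$ is in the image iff $\mathrm{Gr}^{-j}V\neq 0$. Theorem~\ref{thm:MainThmcopy} identifies the $j\in[0,1)$ for which this happens as exactly the jumps of $\Jac(C)$, yielding the displayed description of the image.

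For rationality, I would invoke property (c) of Section~\ref{intro: vcan is k-valuation}: the image of $\vcan$ is contained in $\frac{1}{e}\overline{\Z}$ for some $e\in\Z_{>0}$. Hence every $-j$ with $j\in[0,1)$ lying in the image is rational, so every jump of $\Jac(C)$ is rational. All the nontrivial content is already encapsulated in Theorem~\ref{thm:MainThmcopy}; granted that, the corollary is a purely formal consequence of the valuation-theoretic axioms, so there is no real obstacle to overcome.
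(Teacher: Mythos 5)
Your argument is correct and is precisely the formal deduction the paper has in mind (the paper gives no proof of the corollary, treating it as immediate). You correctly reduce the description of the image to the observation that $\rho\in\mathrm{Im}(\vcan|_{V\setminus\{0\}})$ iff $\mathrm{Gr}^\rho V\neq 0$, use property~(b) to reduce to $\rho\in[-1,0]$ modulo $\Z$, invoke Theorem~\ref{thm:MainThmcopy} to identify these classes with the jumps, and use property~(c) for rationality. This is the intended route and there is nothing to add.
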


\begin{remark} By \cite[Corollary~5.5.7]{EHN15}, it follows that the smallest positive integer $e$ such that $\mathrm{Im}(\vcan)\subset \frac{1}{e}\Z$ is given by the so-called \emph{stabilisation index} $e(C)$ of $C$. It is not hard to see directly that $\mathrm{Im}(\vcan)\subset \frac{1}{e(C)}\Z$. On the other hand, the minimality of $e(C)$ does not seem to follow directly from our methods. On a similar note, our methods do not seem to yield the explicit formula for the jumps in terms of the combinatorial data of an \emph{snc} model \cite[Theorem~5.4.1]{EHN15}.
\end{remark}

\begin{question} Theorem~\ref{thm:MainThmcopy} suggests the following question, which we plan to study in forthcoming work.
Let $k$ be a Henselian discretely valued field with algebraically closed residue field $\tilde{k}$ and $A$ an Abelian $k$-variety. Let $\A$ be the N\'eron model of $A$ and equip the special fiber $\A_s$ with Edixhoven's descending filtration $\{F^\rho\A_s\}_{\rho\in[0,1)}$ by closed unipotent smooth subgroup $\tilde{k}$-schemes \cite[\S5.4.5]{Edi92}. Then does there exist a $\overline{\Q}$-valued discrete $k$-valuation $\vcan$ on $V\coloneqq \Lie(A)^\vee$ such that $\Lie(\A)^\vee=V^{>-1}$, which is the right analogue of the second equality in Lemma~\ref{lemma: Omega(C) and Omega log(C)}, and such that for all $j\in[0,1)$ there exist $\tilde{k}$-isomorphisms $\mathrm{Gr}^{-j}V\cong \mathrm{Gr}_j\A_s$? Here $\mathrm{Gr}^{-j}V$ denotes the $(-j)^{\text{th}}$ graded piece of the ascending filtration of $V$ induced by $\vcan$. An affirmative answer to the above question would imply rationality of Edixhoven's jumps for such $A$, which as mentioned above is an open problem \cite[\S1]{EHN15}. Although Theorem~\ref{thm:MainThmcopy} yields the case of Jacobians, we are not aware of a direct construction of isomorphisms $\mathrm{Gr}^{-j}V\cong \mathrm{Gr}_j\A_s$. 
\end{question}

\refstepcounter{equation}\subsection{Orthogonal canonical forms.} The proof of Theorem~\ref{thm:MainThmcopy} is obtained by studying orthogonal bases with respect to $\vcan$ and their behaviour under tamely ramified base change. Recall that a basis $\{\omega_i\}_{i\in I}$ of $V$ is called \emph{orthogonal} (with respect to $\vcan$) if for all $a_i\in k$ we have 
$$\vcan\left(\sum_{i\in I}a_i\omega_i\right) = \min_{i\in I}\vcan(a_i\omega_i).$$ 
If $k$ is complete, then $V$ admits an orthogonal basis \cite[Corollary~2.4.3.11]{BGR}. Modulo some details that we defer to Section~\ref{sec:ProofOfMainThm}, this reduces Theorem~\ref{thm:MainThmcopy} to Proposition~\ref{prop:main} below. 
\begin{proposition} \label{prop:main}
Assumptions as in Theorem~\ref{thm:MainThmcopy}. Suppose there exists an orthogonal basis $\{\omega_i\}_{i=1,\dots,g}$ of $V$ with respect to $\vcan$ so that $-1<\vcan(\omega_i)\le 0$ for all $i=1,\dots,g$. Then, up to rearranging, the tuple of jumps of $\mathrm{Jac}(C)$ is given by $$(-\vcan(\omega_1),\dots,-\vcan(\omega_g)).$$ 
\end{proposition}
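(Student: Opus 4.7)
The plan is to reduce Proposition~\ref{prop:main} to the case of semistable reduction by a tamely ramified base change, and in that setting to match an explicit $\sigma$-eigenbasis of the cotangent module of the N\'eron model with the character decomposition defining Edixhoven's jumps. Concretely, I would choose a tame finite Galois extension $k'/k$ of degree $e$ with $e\vcan(\omega_i)\in\Z$ for every $i$ and so that $C_{k'}$ admits semistable reduction. Let $R'$ denote its ring of integers, fix a uniformizer $\pi'\in R'$, and let $\sigma$ generate $\mathrm{Gal}(k'/k)$ with $\sigma(\pi')=\zeta\pi'$ for a primitive $e$-th root of unity $\zeta$. Put $a_i := -e\vcan(\omega_i)\in\{0,\ldots,e-1\}$ and $\omega_i' := (\pi')^{a_i}\omega_i\in V\otimes_k k'$. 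Since each $\omega_i$ is Galois-invariant, $\sigma(\omega_i')=\zeta^{a_i}\omega_i'$. Extending $\vcan$ to $V\otimes_k k'$ so that $\vcan(\pi')=1/e$ gives $\vcan(\omega_i')=0$; combined with orthogonality, $\{\omega_i'\}$ is then an $R'$-basis of the lattice $(V\otimes_k k')^{\ge 0}$.

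Next I would pass to a regular $R'$-model $\C'$ of $C_{k'}$ with semistable special fiber. In this setting $\mathrm{mult}(E)=1$ for every $E$, so formula~\eqref{eqn:intro defn vE} reduces to $v_E(\omega)=\ord_E(\div\omega)$, which identifies $(V\otimes_k k')^{\ge 0}$ with $H^0(\C',\omega_{\C'/R'})$. Because $C$ has index one, so does $C_{k'}$, hence the N\'eron model $\A'$ of $\Jac(C)_{k'}$ is $\Pic^0(\C'/R')$, and Grothendieck--Serre duality for the semistable curve $\C'/R'$ yields a $\sigma$-equivariant identification $H^0(\C',\omega_{\C'/R'})\cong\omega_{\A'/R'}$. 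In particular $\{\omega_i'\}$ is a $\sigma$-eigenbasis of $\omega_{\A'/R'}$, with $\omega_i'$ of eigencharacter $\chi_{a_i}\colon\sigma\mapsto\zeta^{a_i}$.

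Finally, Edixhoven's definition of the jumps, transported to the cotangent module, asserts that $j=a/e\in[0,1)$ is a jump of $\Jac(C)$ with multiplicity $\dim_{\tilde k}(\omega_{\A'/R'}\otimes_{R'}\tilde k)_{\chi_a}$. Reading this multiplicity off the reduction of our eigenbasis gives $\#\{i:a_i=a\}=\#\{i:-\vcan(\omega_i)=a/e\}$, which is the content of Proposition~\ref{prop:main}.

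The main obstacle is controlling the effect of tame base change on $\vcan$: one must verify that the natural extension of $\vcan$ from $V$ to $V\otimes_k k'$ (with $\vcan(\pi')=1/e$) coincides with the canonical valuation computed directly from snc models of $C_{k'}$, and that this extension remains orthogonal on $\{\omega_i'\}$. Both should follow from the weight-function description of $\vcan$ (Section~\ref{subsection: weight functions}) together with a comparison of snc models before and after base change. A secondary, bookkeeping task is to pin down the precise character convention in Edixhoven's definition, so that the jump $a/e$ really corresponds to the character $\chi_a$ on the cotangent module rather than to its inverse $\chi_{-a}$.
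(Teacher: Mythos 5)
Your proposal takes a genuinely different route from the paper, but it contains a gap that cannot be fixed without changing the strategy: you implicitly assume that $C$ acquires semistable reduction after a \emph{tamely} ramified base change. Nothing in the hypotheses of Theorem~\ref{thm:MainThmcopy} or Proposition~\ref{prop:main} guarantees this; $\Jac(C)$ may be wildly ramified (e.g.\ in small residue characteristic), in which case no tame extension $k'/k$ makes $\C'$ semistable, and your passage to $\Pic^0(\C'/R')$ is unavailable. The paper avoids this entirely: the jumps are defined as a limit of $(k'/k)$-jumps over the directed system of tame extensions (Definition~\ref{def: k-jumps}), and the proof computes each $(k'/k)$-jump directly via Lemma~\ref{lemma: tangent space via regular models} and then passes to the limit (Lemma~\ref{lem:jumps limit coprime}). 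No semistable model is ever invoked, and hence no tameness hypothesis sneaks in.

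There is a second, independent problem with your chosen degree. You take $[k':k]=e$ so that $e\,\vcan(\omega_i)\in\Z$. But then $[k':k]$ is \emph{not} coprime to $e$, and the argument that the prolongation of $\vcan$ to $V\otimes_k k'$ is unique and keeps $\{\omega_i'\}$ orthogonal breaks down. After a degree-$e$ extension all $\omega_i'$ have $\vcan'=0$, so orthogonality in degree $0$ is no longer ``visible'' from the valuations and must be argued separately — precisely the content you defer as a ``secondary bookkeeping task,'' but which is in fact delicate: two forms with the same minimal weight on the same component can combine to a form of strictly larger weight. The paper's Lemma~\ref{lem:unique_extension_val_vector_space} circumvents this by only ever using tame extensions of degree coprime to $e$, where the extension of the valuation is forced and orthogonality is automatically preserved; the final jump values are then recovered as a limit, not read off a single fiber. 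If you wish to salvage the eigenbasis picture, you would first have to pass through a cofinal system of coprime-degree extensions and argue convergence anyway — at which point you have reconstructed the paper's argument plus an unnecessary semistable detour.
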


\refstepcounter{equation}\subsection{A brief outline of the proof.} The proof of Proposition~\ref{prop:main} will be given in Section~\ref{sec:ProofOfMainThm}. We will first relate the jumps of $\mathrm{Jac}(C)$ to the change of the $R$-lattice $V^{>-1}$ under tame base change. Afterwards the main idea is that an orthogonal basis with respect to $\vcan$ is still orthogonal after a finite tamely ramified base field extension $k'/k$ for which the degree $[k':k]$ is coprime to the integer $e$ as in Section~\ref{intro: vcan is k-valuation}. For this, we require two additional key properties of $\vcan$:
\begin{enumerate}[label=(\alph*)]
\item the valuation $\vcan$ is invariant under tamely ramified base change, cfr. Lemma~\ref{lemma: props wt functions};
  \item for any $\omega\in V$, we have $\omega\in V^{>-1}$ if and only if $\omega$ extends to some holomorphic form on a (equivalently any) regular $R$-model $\C$ of $C$, cfr. Lemma~\ref{lemma: Omega(C) and Omega log(C)}.
\end{enumerate}
 
\begin{example}
   \label{example} 
   Let $k=\Q_2^{\mathrm{ur}}$ and consider the genus 2 hyperelliptic curve with affine equation $C\colon y^2 = 8x^6 + x^3 + 2$. This is a $\Delta_v$-regular curve in the sense of \cite{Dok21} -- see also Section~\ref{subs:delta-regular curves}. The algorithms in loc.\ cit.\ yield an explicit \emph{snc} model $\C$ of $C$ from the subdivided Newton polygon $\Delta$ of the affine equation, decorated with $v$-values on lattice points, see Figure~\ref{fig: example}.

\refstepcounter{equation}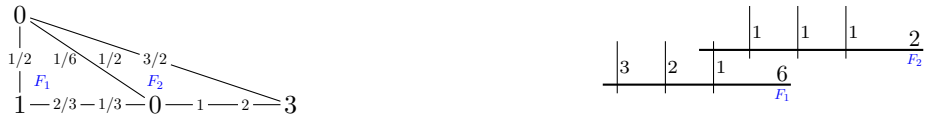
\begin{figure}[ht!]
   \vskip 7pt
{\hfill\pbox[c]{10cm}{
\begin{tikzpicture}[xscale=0.6,yscale=0.6,
  sml/.style={scale=0.55},
  lrg/.style={scale=0.9,inner sep=0.2em},
  fname/.style={blue,scale=0.55},
  lin/.style={-,shorten <=-0.07em,shorten >=-0.07em}]
\node[fname] at (0.50,0.50) {$F_1$};
\node[fname] at (3.0,0.50) {$F_2$};
\node[lrg] at (0,0) (1) {1};
\node[sml] at (0,1) (2) {1/2};
\node[lrg] at (0,2) (3) {0};
\node[sml] at (1,0) (4) {2/3};
\node[sml] at (1,1) (5) {1/6};
\node[sml] at (2,0) (6) {1/3};
\node[sml] at (2,1) (7) {1/2};
\node[lrg] at (3,0) (8) {0};
\node[sml] at (3,1) (9) {3/2};
\node[sml] at (4,0) (10) {1};
\node[sml] at (5,0) (11) {2};
\node[lrg] at (6,0) (12) {3};
\draw[lin]
 (3) edge (9) (9) edge (12)
 (1) edge (4) (4) edge (6) (6) edge (8)
 (8) edge (10) (10) edge (11) (11) edge (12)
 (3) edge (8)
 (1) edge (2) (2) edge (3)
;
\end{tikzpicture}
}\hfill\hfill
\pbox[c]{10cm}{
\begin{tikzpicture}[xscale=0.8,yscale=0.7,
  l1/.style={shorten >=-1.3em,shorten <=-0.5em,thick},
  l2/.style={shorten >=-0.3em,shorten <=-0.3em},
  lfnt/.style={font=\tiny},
  rightl/.style={right=-3pt,lfnt},
  mainl/.style={scale=0.8,above left=-0.17em and -1.5em},
  facel/.style={scale=0.5,blue,below right=-0.5pt and 6pt}]
\draw[l1] (0.00,0.00)--(2.26,0.00) node[mainl] {6} node[facel] {$F_1$};
\draw[l2] (0.00,0.00)--node[rightl] {3} (0.00,0.66);
\draw[l2] (0.80,0.00)--node[rightl] {2} (0.80,0.66);
\draw[l1] (1.60,0.66)--(4.46,0.66) node[mainl] {2} node[facel] {$F_2$};
\draw[l2] (1.60,0.00)--node[rightl] {1} (1.60,0.66);
\draw[l2] (2.20,0.66)--node[rightl] {1} (2.20,1.33);
\draw[l2] (3.00,0.66)--node[rightl] {1} (3.00,1.33);
\draw[l2] (3.80,0.66)--node[rightl] {1} (3.80,1.33);
\end{tikzpicture}
}\hfill}

\bigskip
   \caption{Left: the subdivided Newton polygon $\Delta$ for the given affine equation of $C$, decorated with a $v$-function whose values on lattice points are indicated. Right: special fiber of the minimal \emph{snc} model $\C$ of $C$.}\label{fig: example}
    \end{figure}
    In fact, the \emph{snc} model $\C$ obtained is minimal, and by \cite[Theorem~8.12]{Dok21}, there is an explicit $R$-basis $B=\{\omega_{(1,1)},\omega_{(1,2)}\}$ for $H^0(\C,\omega_{\C/R}),$ viewed as an $R$-submodule of $V$, so that on the affine part we have
   \[    \omega_{(1,1)} = \frac{\mathrm{d} x}{2y},\quad  \omega_{(1,2)} = \frac{\mathrm d x}{2}.\]
   One then computes $\vcan(\omega_{(1,1)}) = - \frac16$ and $\vcan(\omega_{(1,2)})=-\frac12$. 
Note that for any $a,b\in k$ we have $\vcan(a\omega_{(1,1)}+b\omega_{(1,2)})=\min\{v(a)-\frac16,v(b)-\frac12\}$, so $B$ is orthogonal with respect to $\vcan$. It follows from Proposition~\ref{prop:main} that the jumps of $C$ are given by $\frac12$ and $\frac16$. It is no coincidence that these are the $v$-values on the interior lattice points of $\Delta$ -- compare Theorem~\ref{cor:jumps delta-regular}. 
\end{example}
\begin{remark} In \cite{DDMM23} and subsequent works, it is shown how in odd residue characteristic much of the arithmetic of hyperelliptic curves is determined by the combinatorics of \emph{cluster pictures}. For instance, a basis of integral differential forms for certain such curves is explicitly constructed in \cite{Kunzweiler20} and \cite{Muselli22}. In fact, in both of these papers, the construction of $\vcan$ and orthogonal bases appear implicitly. For instance, the differential forms $\mu_i$ in \cite[Theorem~6.3]{Muselli22} as well as in \cite[Theorem~1.1]{Kunzweiler20} form an orthogonal basis with respect to $\vcan$, and in both papers the invariants $e_i$ satisfy $-\vcan(\mu_i)=[e_i]$, where $[e_i] = e_i-\lfloor e_i\rfloor$ denotes the decimal part of $e_i$. See also Remark~\ref{rmk: jumps for hyperelliptic curves}.

\end{remark}
\refstepcounter{equation}\subsection{Overview of the paper.} Section~\ref{sec:Preliminaries} contains some preliminaries on jumps, weight functions and $\vcan$. A detailed proof of Theorem~\ref{thm:MainThmcopy} can be found in Section~\ref{sec:ProofOfMainThm}. In Section~\ref{subs:delta-regular curves}, we make our constructions explicit in the case of $\Delta_v$-regular curves, generalising Example~\ref{example}.

\section{Preliminaries}
\label{sec:Preliminaries}

\refstepcounter{equation}\subsection{Notation and assumptions.} \label{subsection: setup}
Throughout the remainder of the article we keep the following assumptions. We let $k$ denote a Henselian discretely valued field with algebraically closed residue field $\tilde{k}$ of characteristic $p\geq 0$ and ring of integers $R$. For example, $k$ could be the unramified completion of a $p$-adic field. Fix an algebraic closure $k^a$ of $k$.

Let $\pi$ be a uniformiser of $R$ and we write $v$ for the normalised valuation of $k$, so $v(\pi)=1$. Since $k$ is assumed to be Henselian, for every finite seperable extension $k'/k$, there exists a unique normalised valuation $v'$ on $k'$ satisfying $v'(a)=[k':k]\cdot v(a)$ for all $a\in k$. We say $v'$ prolongs $v$. 

Let $C$ be a smooth, proper and geometrically connected $k$-curve of genus $g\ge 1$. We assume that $C$ has index one, i.e., admits a divisor of degree one. This condition is, for instance, met if $C$ contains a $k$-rational point. 
We write $J = \Jac(C)$ for the Jacobian variety of $C$ and $\omega_{C/k}$ for the canonical sheaf of $C$. Recall that $J$ is an Abelian $k$-variety and that the dual of the Lie algebra of $J$ is canonically isomorphic to the $k$-vector space $V\coloneqq H^0(C,\omega_{C/k})$ of canonical forms on $C$. 

If $X$ and $S'$ are schemes over a base scheme $S$, then we write $X_{S'}=X\otimes_S S'$. If $S'$ is the spectrum of a ring $R'$, we will also write $X_{R'}=X_{S'}$.

\refstepcounter{equation}\subsection{Jumps of Abelian varieties.}
\label{subsection: jumps}
Suppose $k'/k$ is a finite tamely ramified extension, let $R'/R$ be the associated extension of rings of integers and set $S=\Spec R$ and $S'=\Spec R'$. 
We let $\pi'$ be a uniformiser of $R'$. Let $A$ be an Abelian $k$-variety. 

Write $\A/S$ and $\A'/S'$ for the N\'eron models of $A/k$ and $A_{k'}/k'$, respectively. By the N\'eron mapping property, there exists a canonical morphism of smooth commutative $S'$-group schemes 
\begin{equation}\label{h}
h_{k'/k}\colon\A_{S'}\longrightarrow \A'.
\end{equation}
Consider the morphism on differentials induced by (\ref{h}), $$h^*_{k'/k}\Omega^1_{\A'/S'}\longrightarrow \Omega^1_{\A_{S'}/S'}=\Omega^1_{\A/S}\otimes_{\mathcal{O}_S} \mathcal{O}_{S'},$$ 
and apply the pullback along the unit section $e_{\A_{S'}/S'}\colon S'\to \A_{S'}$ to obtain the induced map of invariant differentials associated to (\ref{h}), denoted by
\begin{equation}
\label{dual lie(h)} \phi_{k'/k}\colon\omega_{\A'/S'}^{\mathrm{inv}}\to \omega_{\A/S}^{\mathrm{inv}}\otimes_R R',
\end{equation}
which is the dual of $\mathrm{Lie}(h_{k'/k})\colon \mathrm{Lie}(\A)\otimes_R R'\to \mathrm{Lie}(\A')$. Since $h_{k'/k}$ is an isomorphism on the generic fiber, it follows that (\ref{dual lie(h)}) is injective, and since $\phi_{k'/k}$ is a morphism of free $R'$-modules of the same rank $g$, the cokernel has finite length. We write $(c_i(A,k'/k))_{i=1,\dots,g}$ for the unique $g$-tuple of integers given by the normalised $k'$-valuations of the elementary divisors of $\mathrm{coker}(\phi_{k'/k})$ in nondecreasing order, that is $$\mathrm{coker}(\phi_{k'/k})\cong \bigoplus_{i=1,\dots,g} \frac{R'}{\pi'^{c_i(A,k'/k)}R'},$$ with $c_1(A,k'/k)\le \dots\le c_g(A,k'/k)$, and this characterises the integers $c_i(A,k'/k)$.

\begin{definition}[$(k'/k)$-jumps]
With notation as above, we define the \emph{$(k'/k)$-jumps} of $A$ to be the $g$-tuple of rational numbers 
$$j_i(A,k'/k) = \frac{c_i(A,k'/k)}{[k':k]}, \ \ \ i=1,\dots,g.$$ 
\end{definition}

We refer to \cite[\S6.1.3.7]{HN16} for the property that $j_i(A,k'/k)\in [0,1)$ for all $k'/k$ tame, and the property that $\phi_{k'/k}$ is an isomorphism if and only if $A$ has \emph{semi-abelian}\footnote{Recall that we say that an Abelian $k$-variety $A$ has semi-abelian reduction if the identity component of the special fiber of the N\'eron model of $A$ is a semi-abelian variety.} reduction. 

\begin{definition}[$k$-jumps] \label{def: k-jumps}
We define the \emph{$k$-jumps} of $A/k$ to be the $g$-tuple of real numbers 
\begin{equation}\label{eqn defn jumps}
j_i(A,k) = \lim_{k'/k\text{ tame}}j_i(A,k'/k), \ \ \ i=1,\dots,g 
\end{equation}
where the limit is taken over all tame extensions $k'/k$ in $k^a$ ordered by inclusion. 
\end{definition}

To see that the limit converges, first note that if $k''/k'/k$ is a tower of tamely ramified extensions, one has $$j_i(A,k''/k)=j_i(A,k'/k)+\frac{j_i(A_{k'},k''/k')}{[k':k]}.$$ Therefore, if we pick an infinite tower $k\subset k_1 \subset k_2 \subset \cdots$ of tamely ramified extensions which is cofinal among all tame extensions in $k^a$ ordered by inclusion, the resulting $j_i(A,k_n/k)$ form a bounded monotonically increasing sequence, and their limit in Equation~\eqref{eqn defn jumps} is thus well-defined.

\refstepcounter{equation}\subsection{Original definition of jumps.}\label{more on jumps and filtrations}
We refer to \cite[Section~6.1]{HN16} for a detailed comparison of Definition~\ref{def: k-jumps} and Edixhoven's original definition in \cite{Edi92}. In summary, Edixhoven constructed a descending filtration $F = \{F^\rho\A_s\}_{\rho\in[0,1) \cap \Z_{(p)}}$ by closed smooth $\tilde{k}$-subgroups, so that $\mathrm{Gr}^{\rho}\A_s=F^{<\rho}\A_s/F^{>\rho}\A_s$ has dimension equal to the number of $k$-jumps equal to $\rho$, also called the \emph{multiplicity} of $\rho$ as a jump of $A$. The filtration $F$ is obtained as a ``union'' of filtrations as follows: for every $e\in\Z_{\ge 1}\setminus p\Z$ (that is, $e$ is assumed coprime to $p$ in case $p> 0$) there exists a descending filtration of $\A_s$ by closed smooth $\tilde{k}$-subgroups $$\A_s=F^0_e\A_s\supset F^1_e\A_s\supset \dots\supset F^e_e\A_s=0$$ such that $F^a_e\A_s=F^{(a/e)}\A_s$. In particular, the filtration $\{F_e^a\}_{a=0,\dots,e-1}$ jumps at the integers $c_i(A,k'/k)$, and 
\begin{equation}
    j_i(A,k'/k)=\frac{\lfloor [k':k]\cdot j_i(A,k)\rfloor}{[k':k]},\label{eqn: k'/k-jumps in terms of k-jumps}
\end{equation}
where $k'$ is the unique degree $e $ extension of $k$ inside $k^a$.

\begin{lemma}\label{lem:jumps limit coprime} Fix any tower of tamely ramified extensions $k\subset k_1\subset k_2\subset \cdots$, where all the inclusions are strict. Then the limit in Equation~\eqref{eqn defn jumps} may as well be computed by only taking the limit over $(k_n/k)$-jumps.
\end{lemma}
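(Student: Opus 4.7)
The plan is to reduce the lemma to the explicit formula \eqref{eqn: k'/k-jumps in terms of k-jumps} recalled in Section~\ref{more on jumps and filtrations}, which says that $j_i(A,k'/k)=\lfloor [k':k]\cdot j_i(A,k)\rfloor / [k':k]$ for every finite tame extension $k'/k$ inside $k^a$. Since $\lfloor nx\rfloor/n \to x$ as $n\to\infty$ for any $x\in\R$, the lemma becomes equivalent to the purely degree-theoretic statement that $[k_n:k]\to\infty$ along the given tower.

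To verify this, I will invoke the classification of finite tame extensions in our setting: because $k$ is Henselian and discretely valued with algebraically closed residue field, any finite tame extension of $k$ inside $k^a$ is totally tamely ramified and is of the form $k(\pi^{1/e})$ for a unique positive integer $e$ coprime to the residue characteristic, with $[k(\pi^{1/e}):k]=e$; moreover $k(\pi^{1/a})\subseteq k(\pi^{1/b})$ if and only if $a\mid b$. Setting $e_n\coloneqq [k_n:k]$, the strict inclusions $k_n\subsetneq k_{n+1}$ translate into $e_n$ properly dividing $e_{n+1}$. Hence $(e_n)_{n\ge 1}$ is a strictly increasing sequence of positive integers, so $e_n\to\infty$.

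Combining the two steps yields
$$j_i(A,k_n/k)=\frac{\lfloor e_n\cdot j_i(A,k)\rfloor}{e_n}\longrightarrow j_i(A,k),$$
matching the limit from \eqref{eqn defn jumps}. The argument encounters no real obstacle once \eqref{eqn: k'/k-jumps in terms of k-jumps} is in hand; the only additional input is the Kummer-type classification of tame extensions recalled above, which is completely standard under our assumptions on $k$.
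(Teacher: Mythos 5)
Your proof is correct and is essentially the same argument the paper intends (the paper's proof is the one-line ``This follows from Equation~\ref{eqn: k'/k-jumps in terms of k-jumps}''); you have simply filled in the details. One small remark: the detour through the Kummer classification of tame extensions is unnecessary, since a strict inclusion of finite extensions $k_n\subsetneq k_{n+1}$ already forces $[k_{n+1}:k]\geq 2\,[k_n:k]$ by multiplicativity of degrees, so $[k_n:k]\to\infty$ without any structure theory.
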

\begin{proof} This follows from Equation~\eqref{eqn: k'/k-jumps in terms of k-jumps}.
\end{proof}

Note that in \cite[Definition~6.1.2.2]{HN16}, the definition of the integers $c_i(A,k'/k)$ may seem to be different than ours, but they amount to the same (up to rescaling) as in fact $\mathrm{coker}(\phi_{k'/k})\cong \mathrm{coker}(\mathrm{Lie}(h_{k'/k}))$  by  \cite[\S~6.2.1.3]{HN16} -- for convenience of the reader, we recall the argument in the following lemma.

\begin{lemma} \label{lemma: injecitve morphism of finte rank free modules and coker of dual}
Suppose $f\colon M\to N$ is an injective morphism of free $R$-modules of the same finite rank, and write $f^\vee\colon N^\vee\to M^\vee$ for the dual morphism. Then $\mathrm{coker}(f)\cong \mathrm{coker}(f^\vee)$.
\end{lemma}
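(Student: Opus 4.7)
The plan is to reduce to a diagonal presentation via the elementary divisor theorem and then observe that the transpose of a diagonal matrix is itself diagonal with the same entries, so the two cokernels are (non-canonically) isomorphic $R$-modules.

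More precisely, recall that $R$ is a discrete valuation ring, hence a principal ideal domain. By the elementary divisor theorem applied to the injective map $f\colon M\to N$ between free $R$-modules of the same finite rank $g$, one can choose an $R$-basis $(e_1,\dots,e_g)$ of $M$ and an $R$-basis $(f_1,\dots,f_g)$ of $N$, together with non-negative integers $a_1\le\dots\le a_g$, such that $f(e_i)=\pi^{a_i}f_i$ for every $i$. Taking the cokernel then yields a direct sum decomposition
\[
 \coker(f)\;\cong\;\bigoplus_{i=1}^{g} R/\pi^{a_i}R.
\]

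Now pass to duals. Let $(e_1^\vee,\dots,e_g^\vee)$ and $(f_1^\vee,\dots,f_g^\vee)$ be the dual bases of $M^\vee$ and $N^\vee$, respectively. Unwinding the definition of $f^\vee$, one checks directly that $f^\vee(f_i^\vee)=\pi^{a_i}e_i^\vee$ for every $i$; in other words, $f^\vee$ is represented by the same diagonal matrix $\mathrm{diag}(\pi^{a_1},\dots,\pi^{a_g})$ in the dual bases (which is just the transpose of a diagonal matrix). Consequently,
\[
 \coker(f^\vee)\;\cong\;\bigoplus_{i=1}^{g} R/\pi^{a_i}R,
\]
and comparing the two displayed isomorphisms gives the desired (non-canonical) isomorphism $\coker(f)\cong\coker(f^\vee)$.

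There is no real obstacle: the argument is simply the Smith normal form over a DVR plus the trivial fact that transposition preserves diagonal matrices. The only thing worth stressing is that the isomorphism is not canonical, which is harmless for the application (where only the elementary divisors, equivalently the $k'$-valuations thereof, matter).
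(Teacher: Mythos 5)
Your argument is correct, but it takes a genuinely different route from the paper. You diagonalize $f$ via the elementary divisor theorem (Smith normal form over the DVR $R$), observe that in the dual bases $f^\vee$ is represented by the very same diagonal matrix $\mathrm{diag}(\pi^{a_1},\dots,\pi^{a_g})$, and read off that both cokernels are $\bigoplus_i R/\pi^{a_i}R$. The paper instead applies $\Hom(-,R)$ to the short exact sequence $0\to M\to N\to\coker(f)\to 0$ and uses the resulting long exact sequence
\[
0\lra \coker(f)^\vee \lra N^\vee \lra M^\vee \lra \mathrm{Ext}^1(\coker(f),R)\lra \mathrm{Ext}^1(N,R)=0,
\]
noting that $\coker(f)$ is torsion (so $\coker(f)^\vee=0$ and $\mathrm{Ext}^1(\coker(f),R)\cong\coker(f)$), whence $\coker(f^\vee)\cong\coker(f)$. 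Your version is more elementary and explicit, requiring only Smith normal form; as you rightly stress, the resulting isomorphism is non-canonical, which is all the application needs since only the elementary divisors enter the definition of the jumps. The paper's homological argument is shorter once the Ext machinery is granted, produces a more canonical identification $\coker(f^\vee)\cong\mathrm{Ext}^1(\coker(f),R)$, and avoids any choice of bases. Both are fine here.
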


\begin{proof}
 We have the exact sequence $$0\lra \coker(f)^\vee \lra N^\vee \lra M^\vee \lra \mathrm{Ext}^1(\coker(f),R)\lra \mathrm{Ext}^1(N,R)=0$$ and since $f$ is assumed injective, $\coker(f)$ is torsion. This implies $\coker(f)^\vee=0$ as well as $\mathrm{Ext}^1(\coker(f),R)=\coker(f)$ and the claim follows.
 \end{proof}

\begin{remark} 
In a similar way, one can define jumps for any semi-Abelian $k$-variety $A/k$; see, for instance, \cite{HN16,O} and the references therein. 

Note that from the definition it is not clear that the jumps are rational numbers, although as mentioned in Section~\ref{motivation}, this is known in our case of interest, namely when $A=\Jac(C)$. The jumps are also known to be rational in the case where $A$ is a tamely ramified\footnote{Meaning $A$ admits semi-abelian reduction after a tamely ramified base field extension.} abelian variety -- in this case, by \cite[6.1.2.4]{HN16}, the $k$-jumps of $A$ equal the $(k'/k)$-jumps of $A$, where $k'/k$ is any extension so that $A_{k'}$ has semi-abelian reduction. Moreover, the limit in Equation~\eqref{eqn defn jumps} stabilises if and only if $A$ is tamely ramified. 
\end{remark}

\refstepcounter{equation}\subsection{Weight functions.}
\label{subsection: weight functions}

Suppose $\C$ is an \emph{snc} $R$-model of $C/k$, as defined in Section~\ref{sec:CanVal}. By Lipman's resolution of singularities of 2-dimensional schemes, it is well-known that $C$ admits an \emph{snc} $R$-model; see, for instance, \cite[\S8.3.51 and \S9.3.36]{Liu02}. Let us write $S=\mathrm{Spec}\,R$ and $\omega_{\C/S}$ for the relative canonical sheaf of $\C/S$.

We equip $\C$ and $S$ with the standard logarithmic structure, which is the divisorial logarithmic structure given by the special fiber, and we denote by $\C^+$ and $S^+$ the resulting logarithmic schemes. As $\C$ is regular, the sheaf of logarithmic K\"ahler differentials $\Omega_{\C^+/S^+}$ is perfect, and so we can consider the associated determinant line bundle.

\begin{definition}[Logarithmic canonical sheaf]
\label{def:LogCanShf} 
The \emph{logarithmic canonical sheaf} $\omega^{\log}_{\C/S}$ is defined as the determinant sheaf of $\Omega_{\C^+/S^+}^{\log}$. 
\end{definition} 

By \cite[\S3.3.1(1)]{JN20}, we have that 
\begin{equation}
    \label{eqn: log can sheaf and can sheaf} \omega_{\C/S}^{\log}=\omega_{\C/S}(\C_{s,\red}-\C_s)
\end{equation}
as subsheaves of $\iota_*\omega_{C/k}$, where $\iota:C\to\C$ denotes the open immersion. Here, we use that $k$ has perfect residue field.

\begin{definition}[Weight functions] Suppose $k$ is complete. \label{def:wt} 
Suppose $\omega\in V$ is a nonzero canonical form, and let $C^{\an}$ denote the Berkovich analytification of $C$. Then following \cite{MN15}, there is a \emph{weight function} $$\wt_{\omega}\colon C^{\an}\to \overline{\R}\coloneqq \R\cup\{\infty\}$$ attached to $\omega$, characterised as the unique piecewise affine function satisfying the following property.  
Suppose $\C/S$ is any \emph{snc} model of $C/k$. Suppose $E$ is an irreducible component of the special fiber $\C_s$ and denote by $x\in C^{\an}$ the divisorial point associated to the divisorial valuation $$v_E(\cdot)\coloneqq \frac{1}{\mathrm{mult}(E)}\ord_E(\cdot).$$ Write $\sigma$ for the rational section of the logarithmic canonical sheaf $\omega^{\log}_{\C/R}$ induced by $\omega$, then $$\wt_\omega(x)=v_E(\div(\sigma)).$$
\end{definition}

Note that this definition of the weight functions is off by an additive constant of $1$ from the definition in \cite{MN15}, and we follow the conventions of \cite{JN20} -- see \cite[Remark~2.2.1]{JN20}.

\begin{lemma} \label{lemma: props wt functions}
Suppose $k$ is complete and let $\omega\in V$ be nonzero. 
\begin{enumerate}[label=(\alph*)]
    \item \label{lemma: props wt functions_1} The weight function $\wt_{\omega}$ is bounded below and attains its minimum in a divisorial point associated to a divisorial valuation of the form $v_E$ as in Definition~\ref{def:wt}, for any choice of \emph{snc} model $\C$ of $C$. In particular, this minimal value is a rational number.
\item For any finite tame extension $k'/k$ in $k^a$, we have $$\wt_{\omega_{k'}}(x)=[k':k]\cdot\wt_{\omega}(\rho(x))$$ for all $x\in C^{\an}_{k'}$ where $\rho\colon  C^{\an}_{k'} \to C^{\an}$ is the projection.
\end{enumerate}
\end{lemma}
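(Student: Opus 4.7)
For part (a), my plan is to work on the Berkovich skeleton $\mathrm{Sk}(\C) \subseteq C^{\an}$ associated to an snc model $\C$ of $C$. Recall that $\mathrm{Sk}(\C)$ is a finite compact metric graph whose vertex set consists precisely of the divisorial points $v_E$ for $E$ a component of $\C_{s,\red}$. By the construction of weight functions recalled from \cite{MN15}, the restriction $\wt_\omega|_{\mathrm{Sk}(\C)}$ is continuous and piecewise affine with rational slopes along each edge, taking the rational value $v_E(\div\sigma)$ at each vertex $v_E$. Such a function on a compact graph attains its minimum at a vertex, so $\min_{\mathrm{Sk}(\C)} \wt_\omega = v_E(\div\sigma) \in \Q$ for some component $E$. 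To promote this to a global minimum on $C^{\an}$, I would invoke the standard monotonicity property $\wt_\omega \ge \wt_\omega \circ r_\C$, where $r_\C\colon C^{\an} \to \mathrm{Sk}(\C)$ denotes the canonical retraction.

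For part (b), I would first reduce to the case of divisorial points by continuity and density: both $\wt_{\omega_{k'}}$ and $[k':k]\cdot\wt_\omega \circ \rho$ are continuous and piecewise affine on $C_{k'}^{\an}$ once a sufficiently fine snc model is chosen, so verification on divisorial points $v_{E'}$ suffices. Pick such an $E'$ as a component of an snc model $\C'$ of $C_{k'}$ dominating an snc model $\C$ of $C$ via a morphism $g\colon \C' \to \C$; this is arranged by starting from any snc model $\C$ of $C$, resolving the base change $\C_{R'}$ by a toroidal modification, and refining further if needed. Equipping all schemes with their divisorial log structures, $g$ is log-\'etale: $\Spec R' \to \Spec R$ is log-\'etale because $R'/R$ is tamely ramified, and toroidal resolutions of log-smooth schemes are log-\'etale. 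Consequently $g^{*}\omega^{\log}_{\C/R} \cong \omega^{\log}_{\C'/R'}$, so the rational section $\sigma'$ of $\omega^{\log}_{\C'/R'}$ attached to $\omega_{k'}$ coincides with $g^{*}\sigma$, where $\sigma$ is the section attached to $\omega$.

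Let $E \subseteq \C_{s,\red}$ be the component containing $g(E')$, with $N \coloneqq \mathrm{mult}(E)$ and $N' \coloneqq \mathrm{mult}(E')$. The relation $\pi = u\cdot (\pi')^{[k':k]}$ in $R'$ pins the ramification of $g$ along $E'$ to $r = [k':k]\, N'/N$, whence $\ord_{E'}(\sigma') = r\cdot \ord_E(\sigma)$, and a parallel computation with restrictions of normalized valuations to $k(C)$ identifies $\rho(v_{E'})$ with $v_E$. Substituting,
\[
\wt_{\omega_{k'}}(v_{E'}) = \frac{\ord_{E'}(\sigma')}{N'} = [k':k]\cdot \frac{\ord_E(\sigma)}{N} = [k':k]\cdot \wt_{\omega}(\rho(v_{E'})).
\]
I expect the main obstacle to be the log-\'etaleness step in (b): justifying that the chosen toroidal resolution is log-\'etale so that the pullback of log canonical sheaves is exact, and then carefully bookkeeping the normalization factors that produce the $[k':k]$-scaling. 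Part (a) is essentially structural and falls out of the piecewise affine skeleton framework.
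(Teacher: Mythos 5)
The paper disposes of this lemma by citation alone: part (a) to \cite[Theorem~4.7.5]{MN15} and part (b) to \cite[\S4.5.1(2)]{JN20}. You instead sketch a self-contained reconstruction, which is a genuinely different (and more instructive) route.

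Your framework is the right one, and the computation in part (b) is correctly normalized; in particular the bookkeeping $\mathrm{mult}(E')\cdot r = [k':k]\cdot \mathrm{mult}(E)$ and the resulting $[k':k]$-scaling match. However, there is a genuine gap in part (a) at the sentence ``Such a function on a compact graph attains its minimum at a vertex.'' A continuous piecewise affine function with rational slopes on a compact metric graph need \emph{not} attain its minimum at a vertex (consider $t\mapsto |t-\tfrac12|$ on $[0,1]$, reflected to be a dip). What makes the argument work for $\wt_\omega$ is a stronger structural fact about the restriction of the weight function to an edge of $\mathrm{Sk}(\C)$: near a double point of $\C_s$, writing $\omega$ as $f$ times a local generator of $\omega^{\log}_{\C/R}$, one gets $\wt_\omega(v) = v(f) + \text{const}$, and $s\mapsto v_s(f)$ is a \emph{concave} piecewise affine function of the monomial weight (a minimum of affine functions). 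Concavity on each edge forces the minimum of $\wt_\omega|_{\mathrm{Sk}(\C)}$ to occur at a vertex; alternatively, after refining $\C$ so that $\div(\sigma)+\C_{s,\red}$ is snc, the restriction becomes affine on each edge. Without one of these justifications the step does not follow from piecewise affinity alone.

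There is also a softer issue in part (b): after establishing the identity at divisorial points coming from snc models $\C'$ that are \emph{log-\'etale} over an snc model $\C$ of $C$, you assert that verification on all divisorial points follows. But an arbitrary divisorial point on $C_{k'}^{\an}$ may come from a blow-up of $\C_{R'}$ at a smooth closed point of the special fiber, which is not a log-\'etale modification, so your pullback isomorphism $g^*\omega^{\log}_{\C/R}\cong\omega^{\log}_{\C'/R'}$ does not directly apply there. The standard fix is to check the identity only on a cofinal family of skeleta obtained from log-\'etale modifications and then propagate to all of $C_{k'}^{\an}$ via the retraction/monotonicity inequality on both sides — exactly the tool you already invoked in part (a). Stating this explicitly would close the reduction.
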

\begin{proof}
For (a), see for instance \cite[Theorem~4.7.5]{MN15}, and for (b) see for instance \cite[\S4.5.1(2)]{JN20}.
\end{proof}

\refstepcounter{equation}\subsection{A canonical valuation on $H^0(C,\omega_{C/ k})$.} \label{subsection: canonical valuation}
\begin{definition}\label{defn: vcan}
    Let $\C$ be an \emph{snc} model of C and write $\{E_i\}_{i \in I}$ for the set of irreducible components of $\C_{s,\red}$.
    Let $\omega \in H^0(C, \omega_C) \setminus \{0\}$.
    Let $\sigma$ be the rational section of $\omega_{\C / R}^{\log}$ induced by $\omega$. 
    Then we define \[
        \vcan(\omega) = \min_{i \in I}v_{E_i} (\div \sigma)
    .\]
    By convention, $\vcan(0) = \infty$. We refer to $v_\can$ as the \textit{canonical valuation}.
\end{definition}

A few words about $\vcan$ are in order. First, this definition coincides with the one given in Section~\ref{sec:CanVal}, as a consequence of the isomorphism in \eqref{eqn: log can sheaf and can sheaf}; see also the proof of Lemma~\ref{lemma: Omega(C) and Omega log(C)}. Second, $\vcan$ is a discrete $k$-valuation on $V$ (Section~\ref{intro: vcan is k-valuation}), as it can be expressed as a minimum of finitely many discrete valuations $v_{E_i}$. 

\begin{lemma}\label{lemma: defn vcan}
    Definition~\ref{defn: vcan} is independent of the chosen \emph{snc} model $\C$ of $C$. More specifically, for all nonzero $\omega \in H^0(C, \omega_C)$, we have
    \[
        \vcan(\omega) = \min_{x \in C_{\widehat k}^{\mathrm{an}}} \wt_{\omega \otimes 1}(x),
    \]
    where $\hat{k}$ denotes the completion of $k$.
\end{lemma}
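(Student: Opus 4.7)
The plan is to establish the displayed formula
\[
\vcan(\omega) = \min_{x \in C_{\widehat k}^{\mathrm{an}}} \wt_{\omega \otimes 1}(x)
\]
directly; independence of the \emph{snc} model then follows for free, since the right-hand side is manifestly intrinsic to $C$ and $\omega$.

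First I would reduce to the complete case, which is where the weight function is defined. Fix an \emph{snc} $R$-model $\C$ of $C$ and set $\widehat{\C} \coloneqq \C \times_R \widehat R$. Since $R \to \widehat R$ is faithfully flat with the same residue field $\tilde k$, the scheme $\widehat \C$ is an \emph{snc} $\widehat R$-model of $C_{\widehat k}$: regularity is preserved (checked via completions of local rings), and $\widehat \C_s = \C_s$ literally as $\tilde k$-schemes. In particular, the prime components $\{E_i\}_{i\in I}$ of $\C_{s,\mathrm{red}}$ are in bijection with those of $\widehat \C_{s,\mathrm{red}}$ with identical multiplicities. Moreover, the divisorial log structure along $\C_s$ pulls back to the divisorial log structure along $\widehat \C_s$, so combining flat base change with the identity (\ref{eqn: log can sheaf and can sheaf}) yields $\omega^{\log}_{\widehat \C / \widehat R} \cong \omega^{\log}_{\C / R} \otimes_R \widehat R$. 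Hence the rational section $\sigma$ of $\omega^{\log}_{\C/R}$ induced by $\omega$ pulls back to the rational section of $\omega^{\log}_{\widehat \C / \widehat R}$ induced by $\omega \otimes 1$, and for every $i$ the value $v_{E_i}(\div \sigma)$ is the same whether computed on $\C$ or on $\widehat \C$.

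Next, apply Definition~\ref{def:wt} and Lemma~\ref{lemma: props wt functions}(a) on the complete base $\widehat k$ with the \emph{snc} model $\widehat \C$. For each $i \in I$, the divisorial point $x_{E_i} \in C_{\widehat k}^{\mathrm{an}}$ satisfies $\wt_{\omega \otimes 1}(x_{E_i}) = v_{E_i}(\div \sigma)$ by Definition~\ref{def:wt}, and by Lemma~\ref{lemma: props wt functions}(a) the minimum of $\wt_{\omega \otimes 1}$ on $C_{\widehat k}^{\mathrm{an}}$ is attained at one of these $x_{E_i}$. Combining,
\[
\min_{x \in C_{\widehat k}^{\mathrm{an}}} \wt_{\omega \otimes 1}(x) \;=\; \min_{i \in I} v_{E_i}(\div \sigma) \;=\; \vcan(\omega),
\]
which is the claimed formula.

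The only genuine subtlety I anticipate is the base-change compatibility of the log canonical sheaf along $R \to \widehat R$; the remaining ingredients (bijection and matching multiplicities of special-fibre components under completion, preservation of regularity, and the existence and divisorial-attainment of $\min \wt_{\omega \otimes 1}$) are either routine or directly quoted from the preliminaries.
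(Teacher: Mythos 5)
Your proof is correct and follows essentially the same route as the paper's: reduce to the complete case via flat base change along $R \to \widehat R$ (noting the special fibre is unchanged and the (log) canonical sheaf base-changes compatibly), then invoke the defining property of the weight function together with Lemma~\ref{lemma: props wt functions}(a) (resp.\ the paper cites \cite[Proposition~4.5.5]{MN15}) to identify the minimum over $C^{\mathrm{an}}_{\widehat k}$ with the minimum over divisorial points of the completed model. You spell out the base-change compatibility of $\omega^{\log}$ via Equation~(\ref{eqn: log can sheaf and can sheaf}) a bit more explicitly than the paper does, but this is the same argument.
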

\begin{proof} It suffices to prove the second statement. Note that if $\C / R$ is an \emph{snc} $R$-model of $C$, then by \cite[Lemma~8.3.49.(a)-(b)]{Liu02}, $\C_{\hat{R}}$ is an \emph{snc} $\hat{R}$-model of $C_{\hat{k}}$ with unchanged special fiber, and formation of relative canonical sheaves commutes with flat base change. It follows that $\vcan$ can be computed after base change to the completion, and in that case we can apply the defining property of $\wt_{\omega}$ in Definition~\ref{def:wt} and \cite[Proposition~4.5.5]{MN15}.
\end{proof}

\begin{definition}
\label{def:OmegaC}
We define 
$$\Omega^{\log}(C)\coloneqq V^{\ge 0}\ \ \ \text{and}\ \ \ \Omega(C)\coloneqq V^{>-1},$$ 
where recall that for any $\rho\in\R$ we write $V^{\ge \rho}=\{\omega\in V:\ \vcan(\omega)\ge \rho\}$, and similarly for $V^{>\rho}$.
\end{definition}

\begin{lemma} 
\label{lemma: Omega(C) and Omega log(C)}
Let $\C$ be an \emph{snc} $R$-model of $C$. Viewing $H^0(\C, \omega_{\C / R}^{\mathrm{log}})$ and $ H^0(\C, \omega_{\C / R})$ as $R$-submodules of $V$, then as sets we have
$$H^0(\C, \omega_{\C / R}^{\mathrm{log}})=\Omega^{\log}(C) 
\ \ \ \text{and} \ \ \ 
H^0(\C, \omega_{\C / R}) = \Omega(C).$$
In particular, these $R$-submodules of $V$ are independent of the choice of \emph{snc} model $\C$.
\end{lemma}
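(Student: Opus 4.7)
The plan is to translate, for a nonzero $\omega\in V$, the two conditions $\omega\in H^0(\C,\omega_{\C/R}^{\log})$ and $\omega\in H^0(\C,\omega_{\C/R})$ into explicit inequalities on the vertical multiplicities of the divisor of $\omega$, and then compare these inequalities with the formula $\vcan(\omega)=\min_i v_{E_i}(\operatorname{div}\sigma)$ from Definition~\ref{defn: vcan}.

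First I would view $\omega$ as a rational section of $\omega_{\C/R}$ via the inclusion $\omega_{\C/R}\subset \iota_*\omega_{C/k}$, and write its divisor as $\operatorname{div}_{\omega_{\C/R}}(\omega)=H+\sum_i a_iE_i$, where $H$ is horizontal and $a_i\in\Z$. Since $\omega$ is already regular on $C$, the horizontal part $H$ is the closure of $\operatorname{div}_C(\omega)$ and hence effective, so $\omega\in H^0(\C,\omega_{\C/R})$ if and only if $a_i\ge 0$ for every $i$. Next, using (\ref{eqn: log can sheaf and can sheaf}), namely $\omega_{\C/R}^{\log}=\omega_{\C/R}(\C_{s,\red}-\C_s)=\omega_{\C/R}(-\sum_i(m_i-1)E_i)$ where $m_i=\mathrm{mult}(E_i)$, I obtain the subsheaf inclusion $\omega_{\C/R}^{\log}\subset \omega_{\C/R}$. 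Under this inclusion, the divisor of $\omega$ as a rational section of $\omega_{\C/R}^{\log}$ is
\[
\operatorname{div}_{\omega_{\C/R}^{\log}}(\omega)=H+\sum_i(a_i-m_i+1)E_i,
\]
reflecting the standard behaviour of divisors under twisting by $-\sum_i(m_i-1)E_i$. In particular, $\omega\in H^0(\C,\omega_{\C/R}^{\log})$ if and only if $a_i\ge m_i-1$ for every $i$.

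Feeding this into Definition~\ref{defn: vcan} then yields
\[
\vcan(\omega)=\min_i\frac{a_i-m_i+1}{m_i}.
\]
From here I would conclude: (i) $\vcan(\omega)\ge 0$ if and only if $a_i\ge m_i-1$ for every $i$, equivalently $\omega\in H^0(\C,\omega_{\C/R}^{\log})$; (ii) $\vcan(\omega)>-1$ if and only if $(a_i+1)/m_i>0$ for every $i$, equivalently $a_i>-1$, which since $a_i\in\Z$ is the same as $a_i\ge 0$, i.e., $\omega\in H^0(\C,\omega_{\C/R})$. Independence of the choice of $\C$ then follows at once from Lemma~\ref{lemma: defn vcan}, applied to $\vcan$.

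The main point requiring care is the sign convention in (\ref{eqn: log can sheaf and can sheaf}) and the resulting divisor-change formula under the inclusion $\omega_{\C/R}^{\log}\subset \omega_{\C/R}$; once that local fact is pinned down, everything else reduces to elementary arithmetic in the integers $a_i$ and $m_i$, with the integrality of the $a_i$ playing the crucial role in turning the strict inequality $\vcan(\omega)>-1$ into the non-strict condition $a_i\ge 0$.
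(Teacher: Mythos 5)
Your proof is correct and follows essentially the same route as the paper: write the vertical part of $\div_{\omega_{\C/R}}(\omega)$ explicitly, use the relation $\omega_{\C/R}^{\log}=\omega_{\C/R}(\C_{s,\red}-\C_s)$ to shift the coefficients, observe that normality reduces regularity of the section to nonnegativity of the vertical coefficients, and exploit integrality to turn $\vcan(\omega)>-1$ into $a_i\ge 0$. The only cosmetic difference is that you spell out the coefficients $a_i$ and $m_i$ where the paper works directly with $\ord_E(\div\rho)$ and $\ord_E(\div\sigma)$; both then invoke model-independence of $\vcan$ (Lemma~\ref{lemma: defn vcan}) to conclude.
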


\begin{proof} Note that since $\omega_{\C/R}$ and $\omega_{\C/R}^{\log}$ are line bundles, the $R$-modules $H^0(\C, \omega_{\C / R}^{\mathrm{log}})$ and $H^0(\C, \omega_{\C / R})$ are torsion-free, and so we these $R$-modules can indeed be viewed as $R$-submodules of $H^0(C,\omega_{C/k})=V$.

    Suppose $\omega \in H^0(C, \omega_C)$. Let $\rho$ be the rational section of  $\omega_{\C/R}$ induced by $\omega$, and let $\sigma$ be the rational section of  $\omega^{\mathrm{log}}_{\C/R}$ induced by $\omega$.
    As $\C$ is normal, $\rho$ (resp.\ $\sigma$) extend to regular sections on $\C$ if and only if for every irreducible component $E$ of $\C_{s,\red}$, the integers $\ord_E(\div\rho)$ (resp.\ $\ord_E(\div\sigma)$) are nonnegative. The first equality then follows from Lemma~\ref{lemma: props wt functions}\ref{lemma: props wt functions_1}. For the second equality, we can use Equation~\eqref{eqn: log can sheaf and can sheaf} to see that in a neighbourhood of the generic point of $E$, we have $$\div\rho - (\mathrm{mult}(E)-1)\cdot E = \div\sigma,$$ which gives 
    $$v_E(\div\rho)-1+\frac{1}{\mathrm{mult}(E)}=v_E(\div\sigma).$$ 
    This yields $\vcan(\omega)>-1$ if and only if for all irreducible components $E$ of $\C_s$, it holds that $v_E(\div\rho)\ge 0$.
\end{proof}
\begin{lemma} \label{lemma: tangent space via regular models} Suppose $k'/k$ is any finite extension, and denote by $R'/R$ the associated extension of rings of integers. View $\Omega(C)\otimes R'$ and $\Omega'(C)\coloneqq \Omega(C_{k'})$ as sublattices of $V'\coloneqq H^{0}(C_{k'},\omega_{C_{k'}/k'})\cong V\otimes k'$. We then have $$\Omega'(C)\subset \Omega(C)\otimes R',$$ and the inclusion is identified with the map $\phi_{k'/k}$ in Equation~\eqref{dual lie(h)}.
\end{lemma}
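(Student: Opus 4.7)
The plan is to identify both lattices $\Omega'(C)$ and $\Omega(C)\otimes R'$ inside $V'$ with invariant differentials of the appropriate N\'eron models via Raynaud's theorem, and then to read off the claim from the fact that $\phi_{k'/k}$ is the identity on generic fibers.

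First, I use that $C$ has index one to invoke Raynaud's theorem on the N\'eron model of a Jacobian. For any regular proper $R$-model $\C$ of $C$ with $\C_s=\sum_i m_iE_i$, intersecting the Zariski closure of a degree-one divisor of $C$ with $\C_s$ yields $1$, so $\gcd_i m_i=1$. Raynaud's theorem then provides a canonical isomorphism of smooth $R$-group schemes $\Pic^0_{\C/R}\cong\J^0$, where $\J$ denotes the N\'eron model of $J$. Passing to Lie algebras (which sees only the identity component) and dualizing via relative Grothendieck-Serre duality for $\C\to\Spec R$ yields a canonical identification
\[
\omega^{\mathrm{inv}}_{\J/R}=\Lie(\J)^\vee\cong H^0(\C,\omega_{\C/R}),
\]
and by Lemma~\ref{lemma: Omega(C) and Omega log(C)} the right-hand side equals $\Omega(C)$ as a sublattice of $V$. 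The very same reasoning applied over $R'$, to $C_{k'}$ (which inherits index one from $C$) and a regular $R'$-model $\C'$ of $C_{k'}$, gives $\omega^{\mathrm{inv}}_{\J'/R'}\cong\Omega'(C)$ as a sublattice of $V'$.

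By the definition recalled in Section~\ref{subsection: jumps}, $\phi_{k'/k}$ is the map on invariant differentials induced by the canonical comparison morphism $h_{k'/k}\colon\J_{R'}\to\J'$. Since $h_{k'/k}$ is an isomorphism on generic fibers, the generic fiber of $\phi_{k'/k}$ is the identity of $V'$. Through the Raynaud identifications above, $\phi_{k'/k}$ is therefore the restriction of $\mathrm{id}_{V'}$ to the sublattice $\Omega'(C)\subset V'$: its image lies in $\Omega(C)\otimes R'$, giving simultaneously the containment $\Omega'(C)\subset \Omega(C)\otimes R'$ and its identification with $\phi_{k'/k}$.

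The principal obstacle is verifying that the Raynaud identifications at the two levels $R$ and $R'$ are intertwined by $h_{k'/k}$. This is most transparent by choosing $\C'$ to dominate $\C_{R'}$ via a resolution of singularities $\pi\colon\C'\to\C_{R'}$: then $h_{k'/k}$ corresponds under Raynaud to the pullback $\pi^*\colon\Pic^0_{\C_{R'}/R'}\to\Pic^0_{\C'/R'}$, whose dual on invariant differentials is the induced map $H^0(\C',\omega_{\C'/R'})\to H^0(\C_{R'},\omega_{\C_{R'}/R'})=H^0(\C,\omega_{\C/R})\otimes R'$, an isomorphism on generic fibers. Modulo this bookkeeping, the remainder of the argument is purely formal.
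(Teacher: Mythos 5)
Your argument is correct and follows essentially the same route as the paper: the paper's proof simply invokes \cite[Proposition~2.4.4]{EHN15} for the identification $\omega_{\J/S}\cong\Omega(C)$ (Raynaud's theorem for the Jacobian of an index-one curve plus Grothendieck--Serre duality), which is precisely what you spell out, and the identification of the inclusion with $\phi_{k'/k}$ then follows from the two Raynaud identifications being compatible with $h_{k'/k}$, as you check. The only difference is that you re-derive the cited proposition rather than citing it.
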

\begin{proof} This follows from \cite[Proposition~2.4.4]{EHN15}: we can view $\omega^{\text{inv}}_{\J/S}$ as an $R$-submodule of $V$, and doing so we have $\omega^{\text{inv}}_{\J/S} = \Omega(C)$ -- note that here we use the assumption that $C$ has index one.
\end{proof}

\section{Proof of Theorem~\ref{thm:MainThmcopy}}\label{sec:ProofOfMainThm}

In this section, we prove the main theorem. The idea is to find an $R$-basis of $\Omega(C)$ which simultaneously diagonalizes the inclusions in Lemma~\ref{lemma: tangent space via regular models} for all tame extensions $k'/k$ of degree coprime to some integer. 
To this end, we can use a basis of $V$ which is \emph{orthogonal} with respect to $\vcan$.
We show that a suitably normalized orthogonal basis of $V$ becomes an $R$-basis of $\Omega(C)$ which remains orthogonal after base change to $k'$. 
We then recover the jumps of $C$ via the scaling factors required for renormalizing to an $R'$-basis for $\Omega(C')$. 

\begin{lemma}\label{lem:unique_extension_val_vector_space}
Let $W$ be a finite dimensional $k$-vector space equipped with a discrete $k$-valuation $\lambda\colon W \to \frac{1}{e}\overline{\Z}$ for some positive integer $e$.
 Let $k'/k$ be a finite tame extension of degree $d$. We assume that $d$ is coprime to $e$.
\begin{enumerate}[label=(\alph*)]
\item \label{lem:unique_extension_val_vector_space_a} There is a unique discrete $k'$-valuation $\lambda'$ on $W' = W \otimes_k k'$ so that for all $w\in W$ we have $$\lambda'(w\otimes 1)=d\cdot \lambda(w)$$ We say $\lambda'$ \emph{prolongs} $\lambda$.

 \item \label{lem:unique_extension_val_vector_space_b} If $\{w_i\}_{i\in I}$ is a orthogonal basis of $W$ with respect to $\lambda$, then $\{w_i \otimes 1\}_{i\in I}$ is an orthogonal basis for $W'$ with respect to the unique prolongation $\lambda'$ of $\lambda$.  
 
 \item \label{lem:unique_extension_val_vector_space_c} Let $\{w_i\}_{i\in I}$ be an orthogonal basis of $W$. Assume $-1<\lambda(w_i)\le 0$ for all $i\in I$. Then $\{w_i\}_{i\in I}$ is an $R$-basis of $W^{\lambda>-1}$.
 
\end{enumerate}
\end{lemma}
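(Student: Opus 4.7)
I would prove \ref{lem:unique_extension_val_vector_space_a} and \ref{lem:unique_extension_val_vector_space_b} together by constructing $\lambda'$ via the formula that makes $\{w_i\otimes 1\}$ orthogonal by fiat, and then establishing uniqueness separately; part \ref{lem:unique_extension_val_vector_space_c} reduces to a short calculation from orthogonality.

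For existence in \ref{lem:unique_extension_val_vector_space_a} and for \ref{lem:unique_extension_val_vector_space_b}, fix an orthogonal basis $\{w_i\}_{i\in I}$ of $W$ for $\lambda$ (available as in \Cref{lemma: lemma for first part main}, after completing if necessary) and define
\[ \lambda'\Bigl(\textstyle\sum_{i\in I} a_i(w_i\otimes 1)\Bigr) := \min_{i\in I}\bigl(v'(a_i)+d\lambda(w_i)\bigr), \qquad a_i\in k'. \]
Checking from the formula that $\lambda'$ is a discrete $k'$-valuation is routine: the ultrametric inequality and compatibility with $v'$ are formal; discreteness of the image in $\frac{1}{e}\Z$ uses $v'(k'^{\times})=\Z$ together with $\gcd(d,e)=1$. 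The prolongation identity $\lambda'(w\otimes 1)=d\lambda(w)$ for $w\in W$ follows from $v'|_{k}=d\cdot v$ and orthogonality of $\{w_i\}$ for $\lambda$. By construction $\{w_i\otimes 1\}$ is orthogonal for $\lambda'$, proving \ref{lem:unique_extension_val_vector_space_b}.

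The main obstacle is uniqueness in \ref{lem:unique_extension_val_vector_space_a}, where the coprimality $\gcd(d,e)=1$ is essential. Let $\mu$ be any prolongation; it suffices to show $\{w_i\otimes 1\}$ is orthogonal for $\mu$, since this forces $\mu=\lambda'$. Suppose instead there exist $a_i\in k'$ with $\mu(\sum a_iw_i)>\rho := \min_i(v'(a_i)+d\lambda(w_i))$. Discarding indices with $v'(a_i)+d\lambda(w_i)>\rho$ (that partial sum has $\mu$-value $>\rho$ by the ultrametric), I reduce to the case where the minimum is attained at every surviving index. For any two survivors $i,j$, the identity $v'(a_i)-v'(a_j)=d(\lambda(w_j)-\lambda(w_i))\in\Z$ together with $\lambda(w_j)-\lambda(w_i)\in\frac{1}{e}\Z$ and $\gcd(d,e)=1$ forces $\lambda(w_j)-\lambda(w_i)\in\Z$. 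So after rescaling each surviving $w_i$ by a suitable power of $\pi\in k$ (and $a_i$ inversely), I may assume all $\lambda(w_i)=\lambda_0$ and hence all $v'(a_i)=v_0:=\rho-d\lambda_0\in\Z$; writing $a_i=u_i\pi'^{v_0}$ with $u_i\in R'^{\times}$, the hypothesis becomes $\mu(\sum u_i w_i)>d\lambda_0$. Orthogonality of $\{w_i\}$ for $\lambda$ says the leading classes $\bar w_i$ in the $\tilde k$-vector space $W^{\ge\lambda_0}/W^{>\lambda_0}$ are linearly independent. Since $\mu$ extends $\lambda$ on $W$, we have $W\cap\{w\in W':\mu(w)>d\lambda_0\}=W^{>\lambda_0}$, so the natural $\tilde k$-linear map $W^{\ge\lambda_0}/W^{>\lambda_0}\hookrightarrow\{w\in W':\mu(w)\ge d\lambda_0\}/\{w\in W':\mu(w)>d\lambda_0\}$ (using $\tilde k'=\tilde k$) is injective. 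Thus the images $\overline{w_i\otimes 1}$ remain linearly independent over $\tilde k$, and since each $\bar u_i\in\tilde k^{\times}$, the combination $\sum\bar u_i\overline{w_i\otimes 1}$ is nonzero in the target graded piece, i.e., $\mu(\sum u_iw_i)=d\lambda_0$, a contradiction.

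For \ref{lem:unique_extension_val_vector_space_c}, expand $w=\sum a_iw_i$ with $a_i\in k$. Orthogonality gives $\lambda(w)=\min_i(v(a_i)+\lambda(w_i))$. Since $-1-\lambda(w_i)\in[-1,0)$ and $v(a_i)\in\Z$ whenever $a_i\ne 0$, the condition $\lambda(w)>-1$ is equivalent to $v(a_i)+\lambda(w_i)>-1$ for every $i$, hence to $v(a_i)\ge 0$, i.e.\ $a_i\in R$; conversely any $R$-combination satisfies $\lambda(w)\ge\min_i\lambda(w_i)>-1$. Therefore $\{w_i\}$ is an $R$-basis of $W^{\lambda>-1}$.
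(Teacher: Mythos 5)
Your proof is essentially correct, but for parts \ref{lem:unique_extension_val_vector_space_a} and \ref{lem:unique_extension_val_vector_space_b} you take a genuinely different route than the paper, and the route you chose imports a hypothesis the lemma does not assume. The paper's proof of \ref{lem:unique_extension_val_vector_space_a} never invokes an orthogonal basis: it decomposes $W' = \bigoplus_{i=0}^{d-1}\pi'^i W$ as a $k$-vector space, observes that any prolongation forces $\lambda'(\pi'^i(w\otimes 1)) \in i+\frac{d}{e}\Z$, and uses $\gcd(d,e)=1$ to conclude that these cosets are pairwise disjoint for $0\le i<d$, so the ultrametric equality pins down $\lambda'$ as the minimum over this decomposition. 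That argument works over any Henselian $k$ as in the standing hypotheses and makes \ref{lem:unique_extension_val_vector_space_b} an immediate byproduct of uniqueness. Your argument instead builds $\lambda'$ from an orthogonal basis of $W$ and proves uniqueness by reducing modulo the filtration: after discarding non-minimizing terms and rescaling, you use coprimality to force $\lambda(w_i)-\lambda(w_j)\in\Z$, then show that a putative excess of $\mu$ over the minimum would produce a nontrivial vanishing $\tilde k$-linear combination of the leading classes of an orthogonal family, contradicting their independence in the graded piece. That reduction argument is correct (and it is instructive, since it makes visible that $\tilde k'=\tilde k$ and the coprimality are exactly what propagate orthogonality), but it buys you nothing here that the paper's disjoint-coset observation does not, and it costs the extra hypothesis of an orthogonal basis.

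That extra hypothesis is where your write-up has a genuine gap: the lemma as stated does not assume $k$ complete, so $W$ need not have an orthogonal basis, and the parenthetical ``(available \dots after completing if necessary)'' leaves unargued how the statement over $\hat k$ descends to $k$. The descent does go through — one identifies the completion of $(W',\mu)$ with $W'\otimes_{k'}\hat{k'}$, checks that the resulting $\hat{k'}$-valuation prolongs $\hat\lambda$, applies uniqueness over the complete field, and restricts — but this is several nontrivial sentences, not a throwaway remark, and it is entirely avoided by the paper's construction. Your part \ref{lem:unique_extension_val_vector_space_c} matches the paper's argument in substance; the only difference is that you spell out both implications where the paper records only the nontrivial one.
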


\begin{proof} For (a), choose a uniformiser $\pi'$ of $k'$. Then $\{1,\dots,\pi'^{d-1}\}$ is a $k$-basis for $k'$ and so any valuation $\lambda'$ on $W'=\bigoplus_{i = 0}^{d-1}{\pi'}^{i}W$ prolonging $\lambda$ must satisfy $$\lambda'(\pi'^i(w\otimes 1))\in i +\frac{d}{e}\Z$$ for all $w\in W$ and $0\le i<d$. By assumption, $d$ and $e$ are coprime and so the sets $i+\frac{d}{e}\Z$ have pairwise empty intersection for $0\le i<d$. Therefore, we must have 
$$\lambda'\left(\sum_{0\le i<d} \pi'^i(w\otimes 1)\right)=\min_{0\le i<d}\lambda'(\pi'^i(w\otimes 1)),$$
and this defines a unique $k'$-valuation prolonging $\lambda$.

For (b), it follows from the definition of orthogonality that there exists a unique prolongation of $\lambda$ to $W'$ for which $\{w_i\otimes 1\}$ remains orthogonal. By the unicity in (a), we see that in fact $\lambda'$ must be this prolongation of $\lambda$.

For (c), suppose that $w=\sum_{i\in I} a_iw_i\in W^{\lambda>-1}$ for some $a_i\in k$.
We need to show that $a_i\in R$. To do so, note that $$-1<\lambda(w)=\min_{i\in I}v(a_i)+\lambda(w_i)\le \min_{i\in I} v(a_i),$$ so that indeed $v(a_i)\ge 0$ for all $i\in I$.
\end{proof}

\begin{proof}[Proof of Proposition~\ref{prop:main}] 
    Let $B=\{\omega_1,\dots,\omega_g\}$ be an orthogonal basis of $V$ with respect to $\vcan$ such that $-1<\vcan(\omega)\le 0$ for all $i$. By Lemma~\ref{lem:unique_extension_val_vector_space}\ref{lem:unique_extension_val_vector_space_c}, $B$ is also an $R$-basis of $\Omega(C)$.

    Let $e$ be any integer so that $\mathrm{Im}(\vcan)\subset \frac{1}{e}\Z$. Let $k'/k$ be a finite tame extension in $k^a$ of degree coprime to $e$, and denote by $R'/R$ the associated extension of rings of integers. For each such $k'/k$, write $\vcan'$ for the canonical valuation on $H^0(C_{k'}, \omega_{C_k'})$.
    By \cref{lemma: props wt functions}, we see $$\vcan'(\omega \otimes 1)=[k':k]\cdot\vcan(\omega)$$ for every $\omega \in H^0(C, \omega_C)$. 
  In other words $\vcan'$ prolongs $\vcan$. 
    By \cref{lem:unique_extension_val_vector_space}\ref{lem:unique_extension_val_vector_space_b}, $\{\omega_1,\dots,\omega_g\}$ is again an orthogonal basis of $H^0(C_{k'}, \omega_{C_{k'}/ k'})$ with respect to $\vcan'$.
    By Lemma~\ref{lem:unique_extension_val_vector_space}\ref{lem:unique_extension_val_vector_space_c}, we therefore find that 
    $$\left\{\pi'^{\lfloor -[k':k]\vcan(\omega_i)\rfloor} \omega_i\right\}_{i=1,\dots,g}$$ is an $R'$-basis of $\Omega(C_{k'})$.
    Now, using Lemma~\ref{lemma: tangent space via regular models}, we see that the $(k'/k)$-jumps of $C$ are given by \begin{equation}\label{eqn: end proof}
j_i(\Jac(C),k'/k) = \frac{\lfloor -[k':k]\cdot \vcan(\omega_i)\rfloor}{[k':k]}.
\end{equation}

Finally, by Lemma~\ref{lem:jumps limit coprime}, to compute the $k$-jumps it suffices to take the limit of $(k'/k)$-jumps over all field extensions $k'/k$ as above, and from the Equation~\eqref{eqn: end proof} we see that this limit equals $-\vcan(\omega_i)$.  
\end{proof}  

We are now ready to prove the main result.

\begin{proof}[Proof of Theorem~\ref{thm:MainThmcopy}] 
First note that if there exists an orthogonal basis with respect to $\vcan$, then we are done by Proposition~\ref{prop:main}. In particular, this is the case if $k$ is complete, as then any finite dimensional $k$-vector space is Banach, and any Banach space over a discretely valued base field admits an orthogonal basis \cite[Corollary~2.4.3.11]{BGR}. 

In the proof of Lemma~\ref{lemma: defn vcan}, we saw that completion respects \emph{snc} models and their relative canonical sheaves, so the completion of $V$ with respect to $\vcan$ is naturally isomorphic to $H^0(C_{\hat{k}},\omega_{C_{\hat{k}}/\hat{k}})$ equipped with its canonical valuation, and the value group is preserved under completion. So let us check that jumps are invariant under completing the ground field. Suppose $k'/k$ is a finite tamely ramified extension. 
By Lemma~\ref{lemma: tangent space via regular models}, we can identify $\phi_{k'/k}$ with the inclusion $\Omega'(C)\subset \Omega(C)\otimes_R R'$. One sees that $\phi_{\hat{k'}/\hat{k}}$ is the completion of $\phi_{k'/k}$, and by flatness of completion with respect to a discrete valuation, we then find $\widehat{\coker(\phi_{k'/k})}=\coker(\phi_{\hat{k'}/\hat{k}})$, and so $c_i(C, k' / k) = c_i (C_{\hat k}, \widehat{k'} / \hat k)$ for all $i=1,\dots,g$. 
\end{proof}

\section{Jumps for $\Delta_v$-regular curves} \label{subs:delta-regular curves} 

In this section, we illustrate how to use our main result to compute the jumps efficiently for so-called $\Delta_v$-regular curves, as studied by Dokchitser in \cite{Dok21}. 
These are curves $C/k$ for which there is an affine part given by an equation $C_0:f=0$ for some $f\in k[x,y]$ so that the Newton polygon $\Delta$ of $f$, decorated with a piecewise affine-linear function $\Delta\to \R$, contains enough data to construct an \emph{snc} model $\C_\Delta$ of $C$ via toroidal embeddings. In \cite{Dok21}, this piecewise affine-linear function is denoted by $v$; in order to avoid any confusion, we will henceforth write $v_k$ for the valuation on the field $k$. For the reader's convenience, we summarize some of the notions in loc.\ cit.\ below.
For later use, we first recall a classical result on Newton polygons. 

\refstepcounter{equation}\subsection{Baker's theorem.} \label{subs: nondeg} In this section, let $k$ be any field, not necessarily valued. Suppose that $C$ is a smooth, proper and connected $k$-curve, birational to an affine curve $C_0:f=0$ in the torus $\mathbb{G}_{m,k}^2$ for some $f=\sum_{i,j}a_{ij}x^iy^j\in k[x,y]$. Assume $f_y\coloneqq\frac{\partial f}{\partial y}\ne 0,$ or equivalently $k(C)/k(x)$ is a separable extension. Let 
    $$\Delta=\text{convex hull of }\{(i,j) \mid a_{ij}\ne 0\}\subset \R^2$$ 
    be the Newton polygon of $f$ over $k$. Write $\Delta^{\circ}$ for the interior of $\Delta$. For each lattice point $(i,j)\in \Z^2$, we denote by $\omega_{(i,j)}$ the unique meromorphic canonical form on $C$ which on the affine part is given by $x^{i-1}y^{j-1} \frac{dx}{f_y'}$.

    For every edge $L$ of $\partial\Delta$, denote $r_L=|L\cap\Z^2|-1$, fix an ordering $(i_0,j_0),\dots,(i_{r_L},j_{r_l})$ of $L\cap\Z^2$, and define $f_L$ as the degree $r_L$ polynomial $\sum_{n=0}^r a_{i_n,j_n}t^n\in k[t]$.  
    
    We say that $C$ is nondegenerate (or outer regular) at $L$ if $f_L,(xf_x)_L,(yf_y)_L$ have no common roots. 
    We say that $C$ is \emph{nondegenerate} with respect to $\Delta$ if $C$ is nondegenerate with respect to all edges of $\Delta$. Equivalently, $C$ is nondegenerate if and only if $C$ can be obtained as the closure of $C_0$ in the projective toric surface associated to $\Delta$, and in this case points of $C\setminus C_0$ correspond bijectively to the roots of $f_L$ where $L$ runs over the edges of $\partial\Delta$. In particular, \begin{equation}
        |C\setminus C_0|=\sum_{L\subset\partial\Delta} r_L=|\partial\Delta\cap \Z|\label{eqn:partial delta}.\end{equation}
    
    As before, we denote $V=H^0(C,\omega_{C/k})$. The well-known Baker's theorem states if $C$ is nondegenerate with respect to $\Delta$, then $$\{\omega_{(i,j)}\}_{(i,j)\in\Delta^{\circ}\cap\Z^2}$$ is a $k$-basis of $V$ -- see for instance \cite[\S2]{Dok21} for a modern proof.
    In particular, if $(i,j)$ is an interior lattice point of $\Delta$, then $\omega_{(i,j)}$ is holomorphic on $C$. If $(i,j)\in \Z^2$ is any other lattice point, then $\omega_{(i,j)}$ will acquire some poles at the points of $C\setminus C_0$. For later use, we prove the following lemma, which expands on \cite[Remark~2.3]{Dok21}.

    \begin{lemma}\label{lemma: presentation canonical forms with poles}
    The $k$-vector space $H^0\left(C,\omega_{C/k}(C\setminus C_0)\right)$ of holomorphic forms with at most simple poles away from $C_0$ admits a presentation of $k$-vector spaces
    \end{lemma}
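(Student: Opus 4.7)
The plan is to exhibit a short exact sequence of $k$-vector spaces
\begin{equation*}
0 \longrightarrow k \xrightarrow{\iota} k^{\Delta \cap \Z^2} \xrightarrow{\Psi} H^0\bigl(C, \omega_{C/k}(C \setminus C_0)\bigr) \longrightarrow 0,
\end{equation*}
where $\Psi$ sends the standard basis vector indexed by $(i,j)$ to $\omega_{(i,j)}$, and $\iota$ sends $1$ to the coefficient tuple $(a_{ij})_{(i,j)\in\Delta\cap\Z^2}$ of $f$. I would break the verification into three parts: well-definedness of $\Psi$, the inclusion $\mathrm{im}(\iota) \subset \ker(\Psi)$, and a dimension count forcing exactness.

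For well-definedness, I need to show that $\omega_{(i,j)}$ has at most a simple pole at every point of $C \setminus C_0$ whenever $(i,j) \in \Delta \cap \Z^2$. For interior lattice points, this follows from Baker's theorem (where the form is even holomorphic). For a boundary lattice point $(i,j)$, the claim is local on the projective toric surface $X_\Delta$ associated with $\Delta$: at each point $P \in C \setminus C_0$ lying on the toric stratum corresponding to an edge $L \subset \partial\Delta$, choose coordinates on the affine toric chart of $X_\Delta$ cut out by the dual cone at a vertex of $L$, and read off the order of the form $x^{i-1}y^{j-1}\,dx/f_y$. The membership $(i,j)\in\Delta$ translates into a nonnegativity condition on the support function of $\Delta$ and bounds the pole order by $1$; nondegeneracy at $L$ ensures $C$ is smooth at $P$ and meets the toric boundary transversally, making the computation clean. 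For the kernel inclusion, a direct calculation gives
\begin{equation*}
\Psi\bigl((a_{ij})\bigr) = \sum_{(i,j)\in\Delta\cap\Z^2} a_{ij}\, x^{i-1}y^{j-1}\,\frac{dx}{f_y} = \frac{f}{xy}\cdot\frac{dx}{f_y},
\end{equation*}
which vanishes on the dense open $C_0 \subset C$ since $f\equiv 0$ there, hence vanishes on all of $C$.

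For the dimension count, Riemann--Roch applied to the reduced effective divisor $D$ supported on $C\setminus C_0$, whose degree is $|\partial\Delta\cap\Z^2|$ by Equation~\ref{eqn:partial delta}, gives
\begin{equation*}
\dim_k H^0\bigl(C,\omega_{C/k}(D)\bigr) = g-1+\deg D = |\Delta^\circ\cap\Z^2| - 1 + |\partial\Delta\cap\Z^2| = |\Delta\cap\Z^2| - 1,
\end{equation*}
where we use that nondegeneracy of $C$ with respect to $\Delta$ implies $g = |\Delta^\circ\cap\Z^2|$. Since $\ker(\Psi)$ contains the $1$-dimensional subspace $k\cdot(a_{ij})$ and the target has dimension $|\Delta\cap\Z^2|-1$, counting forces $\Psi$ to be surjective with kernel exactly $k\cdot(a_{ij})$.

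The main obstacle is Step~1, the local toric verification that boundary monomials produce at most simple poles; this is fairly mechanical but requires honest bookkeeping with the combinatorics of $\Delta$ and its dual fan. If this becomes unwieldy, one can alternatively invoke a Khovanskii-type residue theorem on the toric pair $(X_\Delta, \partial X_\Delta)$ and cite the established description of adjoint forms on toric hypersurfaces, as is done in the treatment of Baker's theorem in \cite{Dok21}.
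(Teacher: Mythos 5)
Your overall strategy matches the paper's: exhibit the same three-term sequence, check well-definedness of $\Psi$ on boundary lattice points, check that the coefficient tuple lies in $\ker(\Psi)$, and use Riemann--Roch to compute the target dimension. (The paper cites \cite[Theorem~2.2(4)]{Dok21} for the pole bound, which is the ``alternative'' you mention.)

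However, there is a genuine gap in your last step. You establish the one-dimensional inclusion $k\cdot(a_{ij})\subset\ker(\Psi)$ and the equality $\dim_k H^0\bigl(C,\omega_{C/k}(D)\bigr)=|\Delta\cap\Z^2|-1$, and then assert that ``counting forces $\Psi$ to be surjective with kernel exactly $k\cdot(a_{ij})$.'' This does not follow. The rank--nullity theorem gives $\dim\ker(\Psi)+\dim\operatorname{im}(\Psi)=|\Delta\cap\Z^2|$, so the inequality $\dim\ker(\Psi)\ge 1$ only yields $\dim\operatorname{im}(\Psi)\le |\Delta\cap\Z^2|-1$; a priori the kernel could be larger and $\Psi$ non-surjective. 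To close the gap you need an independent upper bound on the kernel (or a direct surjectivity argument). The paper supplies the former: since $\omega_{C/k}|_{C_0}$ is free on $dx/f_y$, a relation $\sum b_{ij}\omega_{(i,j)}=0$ forces the Laurent polynomial $\sum b_{ij}x^{i-1}y^{j-1}$ to vanish on $C_0$, hence (by irreducibility of $f$) to be a multiple of $f$ in $k[x^{\pm 1},y^{\pm 1}]$; comparing Newton polygons, the cofactor must be a scalar times $(xy)^{-1}$, so $(b_{ij})\in k\cdot(a_{ij})$. With $\ker(\Psi)=k\cdot(a_{ij})$ established, the dimension count then gives surjectivity, not the other way around. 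You should add this irreducibility argument (or an equivalent bound) before invoking the count.
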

    \begin{equation}\label{seq:presenation can forms}
        0\longrightarrow k\cdot\left(\sum_{i,j} a_{ij}\omega_{(i,j)}\right)\longrightarrow\bigoplus_{(i,j)\in\Delta\cap\Z^2}k\cdot\omega_{(i,j)}\longrightarrow H^0(C,\omega_{C/k}(C\setminus C_0))\longrightarrow0.
    \end{equation} 
    \begin{proof} Each of the forms $\omega_{(i,j)}$ for $(i,j)\in\Delta^\circ\cap\Z^2$ acquires at most simple poles away from $C_0$ by \cite[Theorem 2.2(4)]{Dok21}. Over $C_0$ the sheaf $\omega_{C/k}$ is generated by $\frac{dx}{f_y}$ as an $\mathcal{O}_C$-module, and so by the irreducibility of $f$ it follows that (\ref{seq:presenation can forms}) is left-exact. On the other hand, a Riemann--Roch computation and Equation~\eqref{eqn:partial delta} show that $\dim_kH^0(C,\omega_{C/k}(C\setminus C_0))=|\partial\Delta\cap\Z|-1$, and so (\ref{seq:presenation can forms}) is right-exact by a dimension count.
    \end{proof}

\refstepcounter{equation}\subsection{$\Delta_v$-regular curves.} \label{subs: delta v}

Keep the notation and assumptions of Section~\ref{subs: nondeg}. Additionally, assume that $k$ is equipped with a normalised discrete valuation $v_k:k\twoheadrightarrow \overline{\Z}$ and write $R$ for the ring of integers of $k$.

Let
$$\Delta_v=\text{lower convex hull of }\{(i,j,\val(a_{ij})) \mid a_{ij}\ne0\}\subset \R^2\times \R$$
be the Newton polytope of $f$ over $R$.
We write $v\colon\Delta\to \R$ for the unique piecewise affine-linear function satisfying $(p,v(p))\in \Delta_v$ for all $p\in\Delta$. The images of faces of the polytope $\Delta_v$ under the canonical projection $\Delta_v\to\Delta$ subdivide $\Delta$ into what we call $v$-faces. Alternatively, we can describe $v$-faces as the maximal simply connected subsets of $\Delta$ on which $v$ is affine-linear.

\begin{lemma}\label{lemma: vcan of delta-regular basis}
Assume $C$ is $\Delta_v$-regular, in the sense of \cite[Definition~3.9]{Dok21}.  Then
$$\vcan(\omega_{(i,j)}) = -v(i,j).$$
\end{lemma}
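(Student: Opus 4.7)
The plan is to apply Definition~\ref{defn: vcan} to Dokchitser's explicit \emph{snc} model $\C_\Delta$ of $C$ and compute $v_E(\omega_{(i,j)})$ for every irreducible component $E$ of $(\C_\Delta)_{s,\mathrm{red}}$. By the combinatorial dictionary of \cite{Dok21}, each 2-dimensional $v$-face $F$ of $\Delta$ produces a principal component $E_F$ of multiplicity equal to the denominator $d_F$ of $v|_F$; additional components may arise from lower-dimensional $v$-faces.

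First, I would determine the divisorial valuation $v_{E_F}$ on $k(x,y)\subset k(C)$ attached to a principal component. The toric construction presents $v_{E_F}$ as a monomial valuation, and using that the face reduction of $f$ at $F$ vanishes on $E_F$ (so that the restriction of $f$ to $E_F$ realizes the minimum valuation among the monomials in the support of $f$), a short computation using $\Delta_v$-regularity yields, for the affine extension $v_F\colon\R^2\to\R$ of $v|_F$,
\[
v_{E_F}(x^a y^b) \;=\; v_F(0,0) - v_F(a,b)\qquad\text{for all }(a,b)\in\Z^2.
\]
Combining this monomial formula with $\omega_{(i,j)} = x^{i-1}y^{j-1}dx/f_y$, the identity $v_{E_F}(f_y) = v_F(0,0) - \partial_b v_F$ (which holds by $\Delta_v$-regularity, ruling out spurious cancellation), and a careful bookkeeping of the passage from $\omega_{\C_\Delta/R}$ to the logarithmic canonical sheaf $\omega^{\log}_{\C_\Delta/R}$ used in Definition~\ref{defn: vcan}, one obtains
\[
v_{E_F}(\omega_{(i,j)}) \;=\; -v_F(i,j).
\]
A similar but more delicate toric computation should show $v_E(\omega_{(i,j)}) \geq -v(i,j)$ for the auxiliary components of the special fiber.

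Combining, since $v$ is convex and piecewise affine on $\Delta$, it equals the pointwise maximum of the affine extensions $v_F$ of its 2-dimensional pieces. Therefore
\[
\vcan(\omega_{(i,j)}) \;=\; \min_E v_E(\omega_{(i,j)}) \;=\; \min_F(-v_F(i,j)) \;=\; -\max_F v_F(i,j) \;=\; -v(i,j),
\]
as claimed. The main obstacle is the verification of the formula $v_{E_F}(\omega_{(i,j)}) = -v_F(i,j)$: while the monomial formula for $v_{E_F}(x^a y^b)$ is immediate from the toric setup, correctly tracking the factor $dx/f_y$ and the interplay between the toric log-structure on the ambient 3-fold and the log-structure from the special fiber of $\C_\Delta$ requires care. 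Much of this analysis can be extracted from \cite{Dok21}, where an explicit $R$-basis of $H^0(\C_\Delta, \omega_{\C_\Delta/R})$ is constructed in terms of scaled versions of the $\omega_{(i,j)}$.
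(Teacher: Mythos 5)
Your plan follows the same strategy as the paper's proof: compute $v_E(\omega_{(i,j)})$ for each component $E$ of the special fiber of Dokchitser's model $\C_\Delta$, identify the value attached to a $v$-face $F$ as $-v_F(i,j)$ (where $v_F$ is the affine extension of $v|_F$), and conclude by convexity of $v$. The crucial difference is that the paper does not re-derive the per-component formula from the toric construction: it simply cites \cite[Proposition~8.1]{Dok21}, which already states $\ord_{x_F}\omega_{(i,j)} = F^*(i,j) - 1 + \delta_F$ with $\delta_F$ the multiplicity of $x_F$ and $F^*$ the affine extension of $-\delta_F v|_F$, whence $v_{x_F}(\omega_{(i,j)}) = \frac{1}{\delta_F}F^*(i,j) = -v_F(i,j)$ is immediate from the definition in Equation~\ref{eqn:intro defn vE}. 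The rest is exactly your convexity argument.

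As written, your re-derivation has a genuine gap: you concede that tracking the factor $dx/f_y$ and the passage from $\omega_{\C_\Delta/R}$ to $\omega^{\log}_{\C_\Delta/R}$ is ``the main obstacle'' and you do not carry it out, and the treatment of non-principal components is deferred to ``a similar but more delicate toric computation.'' One of your intermediate formulas also looks off: with your monomial formula $v_{E_F}(x^ay^b)=v_F(0,0)-v_F(a,b)$ and no cancellation, evaluating on $f_y=\sum_{i,j} j\,a_{ij}x^iy^{j-1}$ gives $v_{E_F}(f_y)=v_F(0,0)+\partial_b v_F+\min_{(i,j)\in\supp f}\bigl(v(a_{ij})-v_F(i,j)\bigr)=v_F(0,0)+\partial_b v_F$, since the bracketed minimum is $0$ on $F$; your stated $v_F(0,0)-\partial_b v_F$ has the wrong sign. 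These are exactly the kinds of bookkeeping issues that Dokchitser's Proposition~8.1 has already resolved, which is why the paper cites it rather than recomputing; doing the same closes your gap cleanly.
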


\begin{proof} Let $\C_{\Delta}$ be the \emph{snc} model associated to $\Delta$ \cite[Theorem~3.13]{Dok21}. Let $F$ be a $v$-face of $\Delta$ and write $x_F$ for the corresponding component of $(\C_\Delta)_s$. It suffices to prove the stronger claim that \begin{equation}
  v_{x_F}(\omega_{(i,j)})\ge -v(i,j) \label{eqn: wt of can integral basis for delta-regular curve}
\end{equation}
with equality if and only if $(i,j)\in F$; we remind the reader that $v_{x_F}(\cdot)$ is defined in (\ref{eqn:intro defn vE}).
By \cite[Proposition~8.1]{Dok21} it follows that $\ord_{x_F}\omega_{(i,j)}=F^*(i,j)-1+\delta_F$, where $\delta_F$ is the multiplicity of the component of $(\C_\Delta)_s$ corresponding to $F$, and $F^*\colon\Z^2\to\Z$ is the unique affine-linear function coinciding with $-\delta_Fv(\cdot)$ on $F\cap\Z^2$. Therefore $v_{x_F}(\omega_{(i,j)})=\frac{1}{\delta_F}F^*(i,j)$ and by convexity of $(i,j)\mapsto v(i,j)$, we have $F^*(i,j)\ge -\delta_Fv(i,j)$ with equality if and only if $(i,j)\in F$. \end{proof}

\begin{remark} It is worth remarking that if $k$ is complete then by 
Equation~\eqref{eqn: wt of can integral basis for delta-regular curve} it follows that the minimum locus of $\wt_{\omega_{(i,j)}}$, also known as the \textit{Kontsevich--Soibelman skeleton} attached to $\omega_{(i,j)}$ -- see for instance \cite{MN15}, coincides with the union of components $x_F$ associated to $v$-faces $F$ for which $(i,j)\in F$, as well as all edges between these components in the dual graph of $\C_{\Delta}$. This can be used to prove a result similar to \cite[Theorem~3.4.6]{BN} for $\Delta_v$-regular curves; details will appear in the second author's forthcoming PhD thesis.
\end{remark}

\begin{proposition} \label{prop: ortho basis delta regular}
    Suppose $C$ is a $\Delta_v$-regular curve. The basis $\{\omega_{(i,j)}\}_{(i,j)\in\Delta^\circ\cap \Z^2}$ is orthogonal with respect to $\vcan$.
\end{proposition}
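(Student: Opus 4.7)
The ultrametric inequality for $\vcan$, combined with Lemma~\ref{lemma: vcan of delta-regular basis}, immediately gives
\[
\vcan\Bigl(\sum_{(i,j) \in \Delta^\circ \cap \Z^2} a_{ij}\omega_{(i,j)}\Bigr) \geq \min_{(i,j)}\bigl(v_k(a_{ij}) - v(i,j)\bigr)
\]
for all coefficients $a_{ij} \in k$. Set $m$ equal to the right-hand side and write $\omega = \sum a_{ij}\omega_{(i,j)}$. To establish orthogonality, the plan is to exhibit a $v$-face $F$ of $\Delta$ such that $v_{x_F}(\omega) = m$, where $x_F$ is the irreducible component of the special fiber of the \emph{snc} model $\C_\Delta$ associated to $F$.

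To find the right candidate face, I would pick $(i_0,j_0) \in \Delta^\circ \cap \Z^2$ attaining $m$, and take $F$ to be any $v$-face containing $(i_0,j_0)$. From the proof of Lemma~\ref{lemma: vcan of delta-regular basis}, one has $v_{x_F}(\omega_{(i,j)}) = -\tilde v_F(i,j)$, where $\tilde v_F\colon \R^2 \to \R$ is the unique affine-linear extension of $v|_F$. Since $v$ is convex (as the piecewise affine function on the lower envelope of $\Delta_v$), one has $\tilde v_F \leq v$ globally, with equality on $F$ and strict inequality on any other $v$-face of the subdivision of $\Delta$. Applying the ultrametric inequality to $v_{x_F}$ then yields $v_{x_F}(\omega) \geq \min_{(i,j)}(v_k(a_{ij}) - \tilde v_F(i,j))$, and this lower bound is readily seen to equal $m$, attained exactly at those $(i,j) \in F$ with $v_k(a_{ij}) = v(i,j) + m$.

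The heart of the proof is upgrading this to an equality $v_{x_F}(\omega) = m$, which I would obtain by reducing $\omega$ along $x_F$. After multiplying by a suitable power of $\pi$ and a local uniformiser of $x_F$ in $\C_\Delta$, one obtains a section $\overline\omega$ of the log canonical sheaf on $x_F$ which is a $\tilde k$-linear combination of the reductions $\overline{\omega_{(i,j)}}$ for $(i,j) \in F \cap \Z^2$. By $\Delta_v$-regularity, the curve $x_F$ is itself nondegenerate with respect to its own Newton polytope (essentially $F$), so Lemma~\ref{lemma: presentation canonical forms with poles} applied to $x_F$ shows that the reductions $\overline{\omega_{(i,j)}}$ satisfy exactly one $\tilde k$-linear relation, namely $\sum_{(i,j) \in F} \overline{c_{ij}^F}\cdot \overline{\omega_{(i,j)}} = 0$, with coefficients coming from the face polynomial of $f$ on $F$. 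The main obstacle is the degenerate case where the coefficient vector producing $\overline \omega$ happens to be proportional to $(\overline{c_{ij}^F})$; I anticipate resolving this by exploiting the freedom to vary $F$ (passing to a proper $v$-subface containing $(i_0,j_0)$, or to an adjacent $v$-face), since the proportionality is a rigid condition unlikely to persist across all candidates.
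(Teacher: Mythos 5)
Your approach is essentially the same as the paper's, but you stop short at precisely the point where the work has to be done. You correctly identify that after reducing $\omega$ along $x_F$ you land in $H^0(x_F, \omega_{x_F}(x_F\setminus x_{F,0}))$, that Lemma~\ref{lemma: presentation canonical forms with poles} (applied to the $\Delta_v$-regular reduction $x_F$) gives a single $\tilde k$-linear relation coming from the face polynomial, and that the only danger is your reduced coefficient vector being proportional to that relation. But ``the proportionality is a rigid condition unlikely to persist across all candidates'' is a heuristic, not an argument, and your two fallbacks do not in fact work as stated: passing to a proper $v$-subface does not give a new component of the special fiber (only the $2$-dimensional $v$-faces contribute prime divisors to $(\C_\Delta)_s$), and merely passing to one adjacent $v$-face invites exactly the same degeneracy again — you have no a priori reason it is resolved there.

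The way the paper closes this gap is a propagation argument you are missing. Write $J_F$ for the set of interior lattice points where the minimum $m$ is attained on $x_F$. The relation from Lemma~\ref{lemma: presentation canonical forms with poles} has \emph{nonzero} coefficients at the vertices of $F$ (this is where $\Delta_v$-regularity is used: the face polynomial has nonzero leading and constant terms). So in the degenerate case proportionality forces the vertices of $F$ into $J_F$. If $(i_0,j_0)$ is such a vertex shared with an adjacent $v$-face $F'$, then $v_{x_{F'}}(u_{i_0j_0}\omega_{(i_0,j_0)}) = v_k(u_{i_0j_0}) - v(i_0,j_0) = m$, while by Equation~\ref{eqn: wt of can integral basis for delta-regular curve} nothing else can dip below $m$ on $x_{F'}$; and since by assumption $\wt_\omega(x_{F'}) \ge \vcan(\omega) > m$, the same vanishing argument applies on $F'$, forcing the vertices of $F'$ into $J_{F'}$. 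Iterating across the dual graph of $v$-faces, every vertex of $\Delta$ lands in some $J_F$. But $J_F \subset \Delta^\circ\cap\Z^2$ by construction, while the vertices of $\Delta$ lie on $\partial\Delta$ — a contradiction. In short: the degeneracy does not just \emph{fail to persist}; if it ever occurs it propagates and yields an outright contradiction with the fact that $\omega$ is supported on interior lattice points. You need to make this step explicit rather than appeal to genericity.
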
 

\begin{proof} The argument is more or less the same as that of \cite[Theorem~8.12]{Dok21}. Arguing by contradiction, we suppose that $\omega=\sum u_{ij}\omega_{(i,j)}$ is a nonzero canonical form for which $$\vcan(\omega)> \min \vcan(u_{ij}\omega_{(i,j)}).$$ We can assume the right hand side equals $m<\infty$ and that this minimum is attained on $x_F$ for some $v$-face $F$ of $\Delta$.

Denote $J_F=\{(i,j)\in \Delta^{\circ}\cap \Z^2 \mid \wt_{u_{ij}\omega_{(i,j)}}(x_F)=m\}$. By Equation~\eqref{eqn: wt of can integral basis for delta-regular curve}, it follows $J_F\subset F$. By assumption, $\wt_\omega(x_F)>m$ , and the scaled reduction of $\sum_{(i,j)\in J_F}u_{ij}\omega_{(i,j)}$ must therefore vanish on $x_F$. By Lemma~\ref{lemma: presentation canonical forms with poles} this can only hold if $J_F$ contains the vertices of $F$, as in the proof of \cite[Theorem~8.12]{Dok21}.
But now we can repeat the argument for neighbouring faces, and we find that for every face $F$, the set $J_F$ contains all the vertices of $F$, and thus every vertex of $\Delta$. This is a contradiction since all the $J_F$ only consist of interior lattice points.
\end{proof}

\begin{theorem} \label{cor:jumps delta-regular} 
Assumptions as in Section~\ref{subsection: setup}. Assume $C$ is $\Delta_v$-regular. Then the $g$-tuple of jumps of $\mathrm{Jac}(C)$ is given by $$\left( [v(i,j)]\right)_{(i,j)\in\Delta^{\circ}\cap\Z^2},$$ where $[x]$ denotes the decimal part of $x$. 
\end{theorem}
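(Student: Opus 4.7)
The plan is to combine the previous two results of the section with Proposition~\ref{prop:main} after a small normalisation. By Baker's theorem, recalled in Section~\ref{subs: nondeg}, the collection $B \coloneqq \{\omega_{(i,j)}\}_{(i,j) \in \Delta^{\circ} \cap \Z^2}$ is a $k$-basis of $V$, so its cardinality equals the genus $g$. By Proposition~\ref{prop: ortho basis delta regular}, $B$ is orthogonal with respect to $\vcan$, and by Lemma~\ref{lemma: vcan of delta-regular basis} we have $\vcan(\omega_{(i,j)}) = -v(i,j)$.

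To apply Proposition~\ref{prop:main} directly, I would need the $\vcan$-values of an orthogonal basis to lie in $(-1,0]$. In general $v(i,j)$ need not lie in $[0,1)$, so I first rescale: set
\[
  \omega'_{(i,j)} \coloneqq \pi^{\lfloor v(i,j)\rfloor}\,\omega_{(i,j)}.
\]
Since $\vcan$ is a discrete $k$-valuation, in the sense of Section~\ref{intro: vcan is k-valuation}, we compute
\[
  \vcan(\omega'_{(i,j)}) = \lfloor v(i,j) \rfloor - v(i,j) = -[v(i,j)] \in (-1,0].
\]
Scaling individual basis vectors by nonzero elements of $k$ preserves orthogonality with respect to any $k$-valuation, so $B' \coloneqq \{\omega'_{(i,j)}\}_{(i,j)\in\Delta^\circ\cap\Z^2}$ remains an orthogonal basis of $V$.

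Finally, I would invoke Proposition~\ref{prop:main} with the orthogonal basis $B'$ (the setup of Section~\ref{subsection: setup}, in particular the index-one and Henselian hypotheses, are in force): the $g$-tuple of jumps of $\mathrm{Jac}(C)$ is, up to rearrangement, given by
\[
  \bigl(-\vcan(\omega'_{(i,j)})\bigr)_{(i,j)\in\Delta^\circ\cap\Z^2} = \bigl([v(i,j)]\bigr)_{(i,j)\in\Delta^{\circ}\cap\Z^2},
\]
which is the desired conclusion.

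There is no real obstacle here: the genuine work has already been done in Proposition~\ref{prop: ortho basis delta regular} (producing the orthogonal basis out of the Newton-polygon combinatorics) and Lemma~\ref{lemma: vcan of delta-regular basis} (identifying its $\vcan$-values with the decorated function $v$). The remaining step is purely the bookkeeping of translating $v$-values into $[0,1)$ by scaling, which exactly picks out the decimal part $[v(i,j)]$ predicted by the theorem.
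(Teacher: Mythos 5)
Your proof is correct and follows the same route as the paper, which simply cites Lemma~\ref{lemma: vcan of delta-regular basis}, Proposition~\ref{prop: ortho basis delta regular} and Proposition~\ref{prop:main}; you merely make explicit the rescaling by $\pi^{\lfloor v(i,j)\rfloor}$ that is needed to bring the $\vcan$-values into the interval $(-1,0]$ required by Proposition~\ref{prop:main}, which the paper leaves implicit.
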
 

\begin{proof} By Proposition~\ref{prop: ortho basis delta regular}, $$\{\pi^{\lfloor v(i,j)\rfloor} \omega_{(i,j)}\}_{(i,j)\in \Delta^\circ\cap\Z^2}$$ is an orthogonal basis of $V$, which is in fact an $R$-basis of $\Omega(C)$ by Lemma~\ref{lemma: vcan of delta-regular basis} and Lemma~\ref{lem:unique_extension_val_vector_space}\ref{lem:unique_extension_val_vector_space_c}. The conclusion follows from Proposition~\ref{prop:main}.
\end{proof}

Three remarks are in order.

\begin{remark} \label{rmk: extension to non-delta regular case}
    We expect that a suitable version of Theorem~\ref{cor:jumps delta-regular} is still true in the non-$\Delta_v$-regular case, where $v$-values of interior lattice points of $\Delta_v$-regular faces still contribute to jumps. A related empirical fact is that for some non-$\Delta_v$-regular $k$-curves, one can construct \emph{snc} models by glueing several $\Delta_v$-regular affine charts, as in \cite[\S10]{Dok21}.
\end{remark}

\begin{remark}\label{rmk: alternate proof jumps delta-regular}
Theorem~\ref{cor:jumps delta-regular} can alternatively be proved without invoking Proposition~\ref{prop: ortho basis delta regular} as follows. Let $e$ denote any positive integer for which $v(i, j) \in  \frac{1}{e}\Z$ for all $(i, j) \in \Delta\cap\Z^2$. Then \cite[Remark~3.11]{Dok21} implies that $C$ remains $\Delta_v$-regular after base change to a finite tame extension $k'/k$ for which $[k':k]$ is coprime to $e$. Moreover after such a base change the subdivided Newton polygon $\Delta_v$ remains the same and $v:\Delta_v\to \R$ and $v':\Delta_v'=\Delta_v\to\R$ are related by $v'(\cdot)=[k':k]\cdot v(\cdot)$. Now for any such $k'/k$, the inclusions $\Omega'(C)\subset \Omega(C)\otimes R'$ are diagonalised with respect to the $R'$-bases $\{\pi'^{\lfloor v'(i,j)\rfloor}\omega_{(i,j)}\}$ and $\{\pi^{\lfloor v(i,j)\rfloor}\omega_{(i,j)}\}$ respectively, where $(i,j)$ runs over $\Delta^{\circ}\cap\Z^2$. Now similarly one retrieves the jumps of $\Jac(C)$ as in the proof of Proposition~\ref{prop:main}.
\end{remark}
\begin{remark}    \label{rmk: jumps for hyperelliptic curves} 
A very similar approach as in Remark~\ref{rmk: alternate proof jumps delta-regular} allows to explicitize the jumps of hyperelliptic $k$-curves $C:y^2=f(x)$ in residue characteristic different from $2$ admitting a so-called \emph{almost rational cluster picture}, as introduced in \cite[Definition 1.1]{Muselli22}. More specifically, we claim, under these conditions and those of Section~\ref{subsection: setup}, that the tuple of jumps of $\Jac(C)$ is given by the decimal parts of the invariants $e_i$ defined in \cite[Theorem~6.3]{Muselli22}\footnote{Note the $y$-regular condition is automatically satisfied in residue characteristic not $2$.}; let us indicate an argument below.

Consider any tame extension $k'/k$ whose degree is coprime to the degree of the splitting field of $f$. The rational cluster picture of $C$ is invariant after base change to any such $k'/k$, and moreover the centers of the corresponding clusters can be taken the same. Let $\{\mu_1,\dots,\mu_g\}$ be the basis of integral differentials constructed in \cite[Theorem~6.3]{Muselli22}, where $g$ is the genus of $C$. The constructions there allow us to make the same choices of clusters $\mathfrak{s}_i$ in the definition of the $\mu_i$ after any such base change $k'/k$. With respect to these uniform choices, the associated invariants $e_i$ and $\mu_i$ satisfy $$e_i'=[k':k]e_i \quad \text{and} \quad \mu_i'=\pi'^{\lfloor e'_i\rfloor}\pi^{-\lfloor e_i\rfloor}\mu_i.$$ Now similarly as for $\Delta_v$-regular curves, for all $k'/k$ as above the inclusions $\Omega'(C)\subset \Omega(C)\otimes R'$ are diagonalised with respect to the bases $\{\mu_i'\}$ and $\{\mu_i\otimes 1\}$, and once more we can retrieve the jumps of $\Jac(C)$ as in the proof of Proposition~\ref{prop:main}.
\end{remark}

\subsection*{Acknowledgements.} We were led to our main result via Theorem~\ref{cor:jumps delta-regular}, which in turn was inspired from doing computations with the software packages of Tim Dokchitser for computing models of curves, see \cite{Dok21}; we thank him heartily for making these packages publicly available.
Moreover, we are grateful to Lars Halle, Johannes Nicaise and Ismaele Vanni for several interesting discussions and comments on a first draft. We also would like to thank the anonymous referees for several useful suggestions.

\printbibliography

@book {HN16,
    AUTHOR = {Halle, Lars Halvard and Nicaise, Johannes},
     TITLE = {N\'{e}ron models and base change},
    SERIES = {Lecture Notes in Mathematics},
    VOLUME = {2156},
 PUBLISHER = {Springer, [Cham]},
      YEAR = {2016},
     PAGES = {x+151},
      ISBN = {978-3-319-26637-4; 978-3-319-26638-1},
   MRCLASS = {14K15 (14G10 14G22)},
  MRNUMBER = {3468022},
MRREVIEWER = {John\ T.\ Cullinan},
       DOI = {10.1007/978-3-319-26638-1},
       URL = {https://doi.org/10.1007/978-3-319-26638-1},
}

@book {Liu02,
    AUTHOR = {Liu, Qing},
     TITLE = {Algebraic geometry and arithmetic curves},
    SERIES = {Oxford Graduate Texts in Mathematics},
    VOLUME = {6},
      NOTE = {Translated from the French by Reinie Ern\'{e},
              Oxford Science Publications},
 PUBLISHER = {Oxford University Press, Oxford},
      YEAR = {2002},
     PAGES = {xvi+576},
      ISBN = {0-19-850284-2},
   MRCLASS = {14-01 (11G30 14A05 14A15 14Gxx 14Hxx)},
  MRNUMBER = {1917232},
MRREVIEWER = {C\'{\i}cero\ Carvalho},
}

@book {BGR,
    AUTHOR = {Bosch, S. and G\"{u}ntzer, U. and Remmert, R.},
     TITLE = {Non-{A}rchimedean analysis},
    SERIES = {Grundlehren der mathematischen Wissenschaften [Fundamental
              Principles of Mathematical Sciences]},
    VOLUME = {261},
      NOTE = {A systematic approach to rigid analytic geometry},
 PUBLISHER = {Springer-Verlag, Berlin},
      YEAR = {1984},
     PAGES = {xii+436},
      ISBN = {3-540-12546-9},
   MRCLASS = {32K10 (30G05 46P05)},
  MRNUMBER = {746961},
MRREVIEWER = {W.\ Bartenwerfer},
       DOI = {10.1007/978-3-642-52229-1},
       URL = {https://doi.org/10.1007/978-3-642-52229-1},
}

@article {Hal10,
    AUTHOR = {Halle, Lars H.},
     TITLE = {Galois actions on {N}\'{e}ron models of {J}acobians},
   JOURNAL = {Ann. Inst. Fourier (Grenoble)},
  FJOURNAL = {Universit\'{e} de Grenoble. Annales de l'Institut Fourier},
    VOLUME = {60},
      YEAR = {2010},
    NUMBER = {3},
     PAGES = {853--903},
      ISSN = {0373-0956,1777-5310},
   MRCLASS = {14H40 (14D06 14G20)},
  MRNUMBER = {2680818},
MRREVIEWER = {Andreas\ H\"{o}ring},
       URL = {http://aif.cedram.org/item?id=AIF_2010__60_3_853_0},
}

@article {BN,
    AUTHOR = {Baker, Matthew and Nicaise, Johannes},
     TITLE = {Weight functions on {B}erkovich curves},
   JOURNAL = {Algebra Number Theory},
  FJOURNAL = {Algebra \& Number Theory},
    VOLUME = {10},
      YEAR = {2016},
    NUMBER = {10},
     PAGES = {2053--2079},
      ISSN = {1937-0652,1944-7833},
   MRCLASS = {14T05 (14E30 14H60)},
  MRNUMBER = {3582013},
MRREVIEWER = {Robin\ de Jong},
       DOI = {10.2140/ant.2016.10.2053},
       URL = {https://doi.org/10.2140/ant.2016.10.2053},
}

@article {Dok21,
    AUTHOR = {Dokchitser, Tim},
     TITLE = {Models of curves over discrete valuation rings},
   JOURNAL = {Duke Math. J.},
  FJOURNAL = {Duke Mathematical Journal},
    VOLUME = {170},
      YEAR = {2021},
    NUMBER = {11},
     PAGES = {2519--2574},
      ISSN = {0012-7094,1547-7398},
   MRCLASS = {11G20 (11G10 14D10 14F20 14H45)},
  MRNUMBER = {4302549},
MRREVIEWER = {J\polhk{e}drzej\ Garnek},
       DOI = {10.1215/00127094-2020-0079},
       URL = {https://doi.org/10.1215/00127094-2020-0079},
}

@article {Edi92,
    AUTHOR = {Edixhoven, Bas},
     TITLE = {N\'{e}ron models and tame ramification},
   JOURNAL = {Compositio Math.},
  FJOURNAL = {Compositio Mathematica},
    VOLUME = {81},
      YEAR = {1992},
    NUMBER = {3},
     PAGES = {291--306},
      ISSN = {0010-437X,1570-5846},
   MRCLASS = {14K15 (11G10)},
  MRNUMBER = {1149171},
MRREVIEWER = {Min\ Ho\ Lee},
       URL = {http://www.numdam.org/item?id=CM_1992__81_3_291_0},
}

@article {EHN15,
    AUTHOR = {Eriksson, Dennis and Halle, Lars Halvard and Nicaise,
              Johannes},
     TITLE = {A logarithmic interpretation of {E}dixhoven's jumps for
              {J}acobians},
   JOURNAL = {Adv. Math.},
  FJOURNAL = {Advances in Mathematics},
    VOLUME = {279},
      YEAR = {2015},
     PAGES = {532--574},
      ISSN = {0001-8708,1090-2082},
   MRCLASS = {11G20 (14H40 14K15)},
  MRNUMBER = {3345191},
MRREVIEWER = {Juan\ Tena},
       DOI = {10.1016/j.aim.2015.04.007},
       URL = {https://doi.org/10.1016/j.aim.2015.04.007},
}

@article {MN15,
    AUTHOR = {Musta\c{t}ă, Mircea and Nicaise, Johannes},
     TITLE = {Weight functions on non-{A}rchimedean analytic spaces and the
              {K}ontsevich-{S}oibelman skeleton},
   JOURNAL = {Algebr. Geom.},
  FJOURNAL = {Algebraic Geometry},
    VOLUME = {2},
      YEAR = {2015},
    NUMBER = {3},
     PAGES = {365--404},
      ISSN = {2313-1691,2214-2584},
   MRCLASS = {14G22 (13A18 14F17)},
  MRNUMBER = {3370127},
MRREVIEWER = {Alessandra\ Bertapelle},
       DOI = {10.14231/AG-2015-016},
       URL = {https://doi.org/10.14231/AG-2015-016},
}

@article {JN20,
    AUTHOR = {Jonsson, Mattias and Nicaise, Johannes},
     TITLE = {Convergence of {$p$}-adic pluricanonical measures to
              {L}ebesgue measures on skeleta in {B}erkovich spaces},
   JOURNAL = {J. \'{E}c. polytech. Math.},
  FJOURNAL = {Journal de l'\'{E}cole polytechnique. Math\'{e}matiques},
    VOLUME = {7},
      YEAR = {2020},
     PAGES = {287--336},
      ISSN = {2429-7100,2270-518X},
   MRCLASS = {14G22 (14J32 14T20 32P05)},
  MRNUMBER = {4077578},
MRREVIEWER = {Alessandra\ Bertapelle},
       DOI = {10.5802/jep.118},
       URL = {https://doi.org/10.5802/jep.118},
}

@article {O,
    AUTHOR = {Overkamp, Otto},
     TITLE = {Jumps and motivic invariants of semiabelian {J}acobians},
   JOURNAL = {Int. Math. Res. Not. IMRN},
  FJOURNAL = {International Mathematics Research Notices. IMRN},
      YEAR = {2019},
    NUMBER = {20},
     PAGES = {6437--6479},
      ISSN = {1073-7928,1687-0247},
   MRCLASS = {14H40},
  MRNUMBER = {4031244},
MRREVIEWER = {H.\ Lange},
       DOI = {10.1093/imrn/rnx303},
       URL = {https://doi.org/10.1093/imrn/rnx303},
}

@incollection {T16,
    AUTHOR = {Temkin, Michael},
     TITLE = {Metrization of differential pluriforms on {B}erkovich analytic
              spaces},
 BOOKTITLE = {Nonarchimedean and tropical geometry},
    SERIES = {Simons Symp.},
     PAGES = {195--285},
 PUBLISHER = {Springer, [Cham]},
      YEAR = {2016},
      ISBN = {978-3-319-30944-6; 978-3-319-30945-3},
   MRCLASS = {14G22},
  MRNUMBER = {3702313},
MRREVIEWER = {Marco\ A.\ Garuti},
}

@article {DDMM23,
    AUTHOR = {Dokchitser, Tim and Dokchitser, Vladimir and Maistret,
              C\'{e}line and Morgan, Adam},
     TITLE = {Arithmetic of hyperelliptic curves over local fields},
   JOURNAL = {Math. Ann.},
  FJOURNAL = {Mathematische Annalen},
    VOLUME = {385},
      YEAR = {2023},
    NUMBER = {3-4},
     PAGES = {1213--1322},
      ISSN = {0025-5831,1432-1807},
   MRCLASS = {11G20 (11G10 14D10 14F20 14H45)},
  MRNUMBER = {4566695},
       DOI = {10.1007/s00208-021-02319-y},
       URL = {https://doi.org/10.1007/s00208-021-02319-y},
}

@article {Kunzweiler20,
    AUTHOR = {Kunzweiler, Sabrina},
     TITLE = {Differential forms on hyperelliptic curves with semistable
              reduction},
   JOURNAL = {Res. Number Theory},
  FJOURNAL = {Research in Number Theory},
    VOLUME = {6},
      YEAR = {2020},
    NUMBER = {2},
     PAGES = {Paper No. 25, 17},
      ISSN = {2522-0160,2363-9555},
   MRCLASS = {11G20 (11G40 14H25)},
  MRNUMBER = {4111906},
MRREVIEWER = {Edward\ F.\ Schaefer},
       DOI = {10.1007/s40993-020-00200-6},
       URL = {https://doi.org/10.1007/s40993-020-00200-6},
}

@misc{Muselli22,
      title={Models and Integral Differentials of Hyperelliptic Curves}, 
      author={Simone Muselli},
      year={2022},
      eprint={2003.01830},
      archivePrefix={arXiv},
      primaryClass={math.NT}
}

\end{document}